\newtheorem{theorem}{Theorem}[section]
\newtheorem{proposition}[theorem]{Proposition}
\newtheorem{lemma}[theorem]{Lemma}
\newtheorem{corollary}[theorem]{Corollary}
\newcommand \CC     {C_\square}
\newcommand \Down   {\operatorname{Down}}
\newcommand \E      {\mathbb E}
\newcommand \GR[1]  {\left \lvert #1 \right \rvert}
\newcommand \Irr    {\operatorname{Irr}}
\newcommand \OC[1]  {\Delta \left( #1 \right)}
\newcommand \R      {\mathbb R}
\newcommand \height {\operatorname{ht}}
\newcommand \lk     {\operatorname{lk}}
\newcommand \supp   {\operatorname{supp}}
\renewcommand \phi {\varphi}
\title[]
{A criterion for a locally distributive semilattice
to have CAT(0) orthoscheme complex}
\author{TOUNAI Shouta}
\address{Graduate School of Mathematical Sciences, The University of Tokyo,
3-8-1 Komaba, Meguro, Tokyo, 153-8914 Japan}
\email{tounai@ms.u-tokyo.ac.jp}
\begin{document}

\begin{abstract}
  In this paper, we give a simple criterion
  for a locally distributive semilattice
  to have CAT(0) orthoscheme complex.
  Namely, the orthoscheme complex of a locally distributive semilattice $S$
  is CAT(0) if and only if $S$ is a flag semilattice,
  that is, any pairwise bounded triple of $S$ is bounded.
\end{abstract}

\maketitle

\section{Introduction}

Gromov \cite{Gro} showed that a cubical complex has non-positive curvature
if and only if the link of each vertex is a flag complex.
This theorem has a lot of applications.
A typical example is the proof that any right angled Artin group
is a CAT(0) group, which goes as follows (see \cite{CD} for more details).
For a right angled Artin group $A_\Gamma$, one can construct
a cubical complex $\mathcal S_\Gamma$ with fundamental group $A_\Gamma$,
which is called the Salvetti complex associated to $A_\Gamma$.
Using Gromov's characterization, one can check that
$\mathcal S_\Gamma$ have non-positive curvature.
Thus $A_\Gamma$ acts properly, cocompactly by isometries on the universal cover
of $\mathcal S_\Gamma$, which is a CAT(0) geodesic space.

It is, however, still open whether all Artin groups are CAT(0) groups.
Brady and McCammond \cite{BM} introduced orthoscheme complexes
as a generalization of cubical complexes.
An orthoscheme is a Euclidean simplex which appears
in the barycentric subdivision of the cube $[-1, 1]^n$.
The orthoscheme complex of a graded poset $P$ is a piecewise Euclidean complex
obtained by gluing orthoschemes along the chains of $P$.
A precise definition will be given in Section~\ref{sec:ortho}.
Brady and McCammond showed the following.
\begin{enumerate}
  \item If the orthoscheme complex of the poset $NPC_n$
    of the non-crossing partitions is a CAT(0) space,
    then the $n$-string braid group is a CAT(0) group.
  \item For $n \le 5$, the orthoscheme complex of $NPC_n$
    is a CAT(0) space.
\end{enumerate}
Thus the $n$-string braid group is a CAT(0) group for $n \le 5$.
They conjectured that (2) holds for arbitrary $n$.
Haettel, Kielak and Schwer showed that (2) holds for $n \le 6$ \cite{HKS}.

Now, it seems to be important to develop criteria
for a graded poset to have CAT(0) orthoscheme complex.
Chalopin et al. \cite{CCHO} established some sufficient conditions.
For example, they showed the following.
\begin{enumerate}
  \item The orthoscheme complex of a modular lattice is a CAT(0) space.
  \item The orthoscheme complex of a locally distributive flag semilattice
    is a CAT(0) space.
\end{enumerate}
Relevance between the CAT(0) properties of orthoscheme complexes
and the computational complexity of the 0-extension problem
was pointed out (see \cite{CCHO} for more details).

It seems, however, that there were few necessary and sufficient conditions
for a graded poset to have CAT(0) orthoscheme complex.
In this paper, we discuss a translation and an extension
of Gromov's characterization for orthoscheme complexes.
We say a semilattice $S$ is a \emph{flag semilattice}
if any pairwise bounded triple of $S$ is again bounded.
As a translation, we show that
the orthoscheme complex of a locally Boolean semilattice $S$ is a CAT(0) space
if and only if $S$ is a flag semilattice (Theorem~\ref{thm:LBS-CAT0}).
As an extension, we show that
the orthoscheme complex of a locally distributive semilattice $S$
is a CAT(0) space if and only if $S$ is a flag semilattice
(Theorem~\ref{thm:LDS-CAT0}).
We also show that the orthoscheme complex
of any locally distributive semilattice
can be embedded in that of some locally Boolean semilattice as a convex subset.

The rest of this paper is organized as follows.
In Section~\ref{sec:pre}, we introduce some notion and terminology.
In Section~\ref{sec:rep}, we establish a representation theorem for
locally distributive semilattices.
In Section~\ref{sec:met}, we review some notion concerning
CAT(0) geodesic spaces and Euclidean polyhedral complexes.
In Section~\ref{sec:ortho}, we discuss an extension
of Gromov's characterization for orthoscheme complexes.

\section{Preliminaries} \label{sec:pre}

\subsection{Simplicial complexes}

An \emph{abstract simplicial complex} $K$ is a family of finite sets
such that any subset of any element of $K$ is again an element of $K$.
An element of $K$ is said to be a \emph{face} of $K$,
and an element of a face of $K$ is said to be a \emph{vertex} of $K$.
In our definition, the empty set is a face of $K$ unless $K = \emptyset$.
Let $V(K)$ denote the set of the vertices of $K$.

\subsection*{Simplicial maps}
Let $K$ and $L$ be abstract simplicial complexes.
A \emph{simplicial map} from $K$ to $L$ is a map $f \colon V(K) \to V(L)$
such that the image $f(\sigma)$ of any face $\sigma$ of $K$ is a face of $L$.
A simplicial map $f$ is an \emph{isomorphism} if $f$ is bijective
and the inverse $f^{-1}$ is also a simplicial map from $L$ to $K$.
If an isomorphism between $K$ and $L$ exists,
then $K$ and $L$ is said to be \emph{isomorphism},
and we write $K \cong L$.

\subsection*{Simplices}
Let $\sigma$ be a finite set.
The abstract simplicial complex consisting of all subsets of $\sigma$
is called the \emph{simplex} of vertex set $\sigma$,
which will be denoted by $\widetilde \sigma$.

\subsection*{Joins}
Let $K$ and $L$ be abstract simplicial complexes.
For simplicity, we assume that $V(K)$ and $V(L)$ are disjoint.
Otherwise we replace $v \in V(K)$ with $(1, v)$,
and $w \in V(L)$ with $(2, w)$.
The \emph{join} of $K$ and $L$ is defined  by
\begin{equation*}
  K * L = \{\, \sigma \cup \tau \;|\; \sigma \in K, \ \tau \in L \,\}.
\end{equation*}
The vertex set $V(K * L)$ is given by the disjoint union $V(K) \sqcup V(L)$.
The inclusions induce simplicial maps $K \hookrightarrow K * L$
and $L \hookrightarrow K * L$.

\subsection*{Links}
Let $K$ be an abstract simplicial complexes,
and $\sigma$ a face of $K$.
The \emph{link} of $\sigma$ in $K$ is defined by
\begin{equation*}
  \lk(\sigma; K) = \{\, \tau \in K \;|\; \sigma \cap \tau = \emptyset, \
  \sigma \cup \tau \in K \,\}.
\end{equation*}
The link $\lk(\emptyset, K)$ of the empty face
is the same as $K$ itself.
The link $\lk(\{ v \}; K)$ of a $0$-face is simply denoted by $\lk(v; K)$.
If $\tau$ is a face of $\lk(\sigma; K)$, then the iterated link
$\lk(\tau; \lk(\sigma; K))$ coincides with $\lk(\sigma \cup \tau; K)$.

\subsection*{Flag complexes}
An abstract simplicial complex $K$ is said to be a \emph{flag complex} if
the following condition holds for any finite subset $\sigma$ of vertices:
if any two-element subset of $\sigma$ forms a face of $K$,
then $\sigma$ itself is also a face of $K$.

\begin{proposition} \label{prp:flag}
  The following hold.
  \begin{enumerate}
    \item An abstract simplicial complex $K$ is a flag complex
      if and only if the following hold
      for any faces $\sigma_1, \sigma_2, \sigma_3$ of $K$:
      if all pairwise unions $\sigma_1 \cup \sigma_2$, $\sigma_1 \cup \sigma_3$
      and $\sigma_2 \cup \sigma_3$ are faces of $K$,
      then $\sigma_1 \cup \sigma_2 \cup \sigma$ is also a face of $K$.
    \item For any finite set $\sigma$,
      the simplex $\widetilde \sigma$ is a flag complex.
    \item If an abstract simplicial complex $K$ is a flag complex,
      then the link $\lk(\sigma; K)$ is a flag complex
      for any face $\sigma$ of $K$.
    \item For abstract simplicial complexes $K$ and $L$,
      the join $K * L$ is a flag complex if and only if
      both $K$ and $L$ are flag complexes.
  \end{enumerate}
\end{proposition}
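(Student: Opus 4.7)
Parts (2)--(4) are direct unpackings of the definitions, so the substantive content is (1). For the forward direction of (1), assume $K$ is a flag complex and that $\sigma_1, \sigma_2, \sigma_3$ are faces whose three pairwise unions belong to $K$. Every two-element subset of $\sigma_1 \cup \sigma_2 \cup \sigma_3$ is contained in at least one pairwise union, hence is a face of $K$, and the flag property of $K$ closes up the triple union.

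For the converse of (1), I would argue by induction on $n = |\sigma|$, where $\sigma$ is a finite vertex set with every two-element subset a face. The cases $n \le 2$ are trivial, and $n = 3$ is precisely the hypothesis applied to the three singleton subsets of $\sigma$. For $n \ge 4$, the inductive hypothesis supplies that every proper subset of $\sigma$ is a face (its own $2$-subsets remain $2$-subsets of $\sigma$). Fix distinct $v, w \in \sigma$ and set $\sigma_1 = \sigma \setminus \{v, w\}$, $\sigma_2 = \{v\}$, $\sigma_3 = \{w\}$; each pairwise union is a proper subset of $\sigma$, hence a face by induction, and the assumed three-face property forces $\sigma = \sigma_1 \cup \sigma_2 \cup \sigma_3 \in K$.

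The remaining parts are bookkeeping. Part (2) is immediate because every subset of $\sigma$ is a face of $\widetilde \sigma$. For (3), if $\tau$ is a finite set of vertices of $\lk(\sigma; K)$ with all $2$-subsets in the link, then every pair in $\sigma \cup \tau$ is a face of $K$: pairs inside $\sigma$ sit in $\sigma$, mixed pairs $\{u, v\}$ sit in $\sigma \cup \{v\} \in K$, and pairs $\{v, w\}$ inside $\tau$ sit in $\sigma \cup \{v, w\} \in K$. The flag property of $K$ then gives $\sigma \cup \tau \in K$, i.e., $\tau \in \lk(\sigma; K)$. For (4), the forward implication uses that $K$ and $L$ appear as induced subcomplexes of $K * L$ under the disjointness convention on vertex sets, so the flag property descends. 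Conversely, any finite $\rho \subset V(K) \sqcup V(L)$ with all pairs in $K * L$ splits as $\rho = \sigma \sqcup \tau$ with $\sigma \subset V(K)$ and $\tau \subset V(L)$; the pairs in $\sigma$ are faces of $K$ and the pairs in $\tau$ are faces of $L$, so the flag properties yield $\sigma \in K$ and $\tau \in L$, whence $\rho = \sigma \cup \tau \in K * L$.

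The only place that takes any thought is setting up the induction in (1); the main obstacle is just making sure the small cases are handled cleanly so that the inductive step can quote the three-face hypothesis with proper subsets as inputs.
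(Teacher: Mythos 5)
Your proof is correct, and since the paper dismisses this proposition with ``the proof is straightforward,'' your write-up simply supplies the standard details the author omitted: the induction on $|\sigma|$ for the converse of (1) and the routine verifications for (2)--(4) all check out. You also correctly read the statement's typo ``$\sigma_1 \cup \sigma_2 \cup \sigma$'' as the intended triple union $\sigma_1 \cup \sigma_2 \cup \sigma_3$.
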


\begin{proof}
  The proof is straightforward.
\end{proof}

\subsection*{Geometric realizations}
For a finite set $\sigma$, the \emph{standard simplex}
of vertex set $\sigma$ is defined by
\begin{equation*}
  \Delta^\sigma = \Bigl\{\, \sum_{v \in \sigma} t_v v \;\Bigm|\;
  t_v \ge 0, \ \sum_{v \in \sigma} t_v = 1 \,\Bigr\}
  \subset \R^{(\sigma)},
\end{equation*}
where $\R^{(\sigma)}$ denotes the free linear space
$\bigoplus_{v \in \sigma} \R \sigma$ with basis $\sigma$.
Geometrically, $\Delta^\sigma$ is a point if $\dim \sigma = 0$,
a segment if $\dim \sigma = 1$, and a triangle if $\dim \sigma = 2$.
For an abstract simplicial complex $K$, the \emph{geometric realization}
of $K$ is defined by
\begin{equation*}
  \GR K = \bigcup_{\sigma \in K} \Delta^\sigma \subset \R^{(V(K))}.
\end{equation*}
Equivalently, $\GR K$ can be defined by
\begin{equation*}
  \GR K = \Bigl\{\, x = \sum_{v \in V(K)} t_v v \;\Bigm|\;
    t_v \ge 0, \
    \sum_{v \in V(K)} t_v = 1, \
    \supp x \in K
  \,\Bigr\},
\end{equation*}
where $\supp x = \{\, v \in V(K) \;|\;  t_v \neq 0 \,\}$.
Usually, we consider $\GR K$ as a topological space with the weak topology
with respect to $\Delta^\sigma$ for $\sigma \in K$, that is,
the coarsest topology on $\GR K$ such that the inclusion
$\Delta^\sigma \hookrightarrow \GR K$ is a continuous map
for each $\sigma \in K$.
In this paper, we consider a piecewise Euclidean metric
on geometric realizations, and study their curvature properties.
Such a metric defines another topology on the geometric realization.
This topology coincides with the weak topology
if and only if $K$ is locally finite.

\subsection{Partially ordered sets}

A \emph{partially ordered set} (\emph{poset} for short)
is a pair of a set $P$ and a partial order $\le$ on $P$.
We denote a poset $(P, {\le})$ simply by $P$ if no confusion can arise.
Let $S$ be a subset of a poset $P$.
Then $S$ can be seen as a poset by the restriction of the partial order on $P$.
In this case, $S$ is said to be a \emph{induced subposet} of $P$.

Let $P = (P, {\le_P})$ and $Q = (Q, {\le_Q})$ be posets.
A map $f \colon P \to Q$ is \emph{order preserving} if $x \le_P y$ implies
$f(x) \le_Q f(y)$ for any $x, y \in P$.
We say $f$ is \emph{strictly order preserving}
if $x <_P y$ implies $f(x) <_Q f(y)$ for any $x, y \in P$.

Let $P$ be a poset.
A \emph{chain} of $P$ is a totally ordered subset of $P$.
The \emph{length} of a chain $C$ is defined to be $\# C - 1$.
The \emph{height} $\height(P)$ of $P$ is defined to be
the least upper bound of the lengths of all chains of $P$,
which might be $\infty$.
The \emph{height} $\height_P(x)$ of an element $x$ of $P$
is defined to be $\height(P^{\le x})$.
We say that $P$ has \emph{locally finite height}
if the height of any elements of $P$ is finite.
Let us note that $P$ has finite height if and only if the order complex of $P$
is finite dimensional.

A subset $A$ of $P$ is said to be \emph{bounded above},
or simply \emph{bounded}, if there exists $u \in P$
such that $A \subset P^{\le u}$.

\subsection*{Lattices}
A \emph{lattice} is a poset $L$ such that any pair $x, y \in L$ has
the greatest lower bound and the least upper bound,
which will be denoted by $x \wedge y$ and $x \vee y$, respectively.
We say $L$ is \emph{modular} if the modular law
\begin{equation*}
  (x \vee y) \wedge z = x \vee (y \wedge z)
\end{equation*}
holds for any $x, y, z \in L$ with $x \le z$.
We say $L$ is \emph{distributive} if the distributive law
\begin{equation*}
  x \wedge (y \vee z) = (x \wedge y) \vee (x \wedge z)
\end{equation*}
holds for any $x, y, z \in L$.
We say $L$ is \emph{bounded} if $L$ has a minimum and a maximum,
which will denoted by $0$ and $1$, respectively.
A bounded lattice $L$ is said to be \emph{complemented} if for any $x \in L$
there exists $y \in L$ such that $x \wedge y = 0$ and $x \vee y = 1$.
A complemented distributive lattice is called a \emph{Boolean} lattice.

\subsection*{Semilattices}
A \emph{meet-semilattices}, or simply \emph{semilattice}, is a poset $S$
such that any pair $x, y \in S$ has the greatest lower bound,
which will be denoted by $x \wedge y$.

\begin{lemma}
  Let $S$ be a semilattice of locally finite height,
  and $A$ a non-empty subset of $S$ closed under $\wedge$,
  that is, $x, y \in A$ implies $x \wedge y \in A$.
  Then $A$ has a minimum element.
\end{lemma}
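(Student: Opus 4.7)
The plan is to reduce to finding a minimum inside the down-set of a fixed element, and then use the locally finite height hypothesis to rule out infinite strictly descending chains constructed from pairwise meets.

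First I would fix any element $a_0 \in A$, and consider the subset $A_0 = A \cap S^{\le a_0}$. This is non-empty (it contains $a_0$) and closed under $\wedge$: if $x, y \in A_0$, then $x \wedge y \in A$ and $x \wedge y \le x \le a_0$, so $x \wedge y \in A_0$. Moreover, any minimum of $A_0$ is automatically a minimum of $A$, because for an arbitrary $x \in A$ the meet $a_0 \wedge x$ lies in $A_0$ and satisfies $a_0 \wedge x \le x$; so a lower bound of $A_0$ is also a lower bound of $A$.

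Next I would argue by contradiction: suppose $A_0$ has no minimum. Then I would construct recursively a strictly descending sequence $a_0 > a_1 > a_2 > \cdots$ in $A_0$ as follows. Given $a_n \in A_0$, by assumption $a_n$ is not a minimum, so there exists $b_n \in A_0$ with $a_n \not\le b_n$. Set $a_{n+1} = a_n \wedge b_n$; this lies in $A_0$ since $A_0$ is closed under $\wedge$, and it satisfies $a_{n+1} \le a_n$. If we had $a_{n+1} = a_n$, then $a_n \le b_n$, contradicting the choice of $b_n$; hence $a_{n+1} < a_n$.

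This produces an infinite strictly descending chain contained in $S^{\le a_0}$, so $\height_S(a_0) = \height(S^{\le a_0}) = \infty$, contradicting the assumption that $S$ has locally finite height. I do not expect any real obstacle: the only subtlety is to recognize that $A_0$ rather than $A$ is the convenient object to work with, so that the constructed descending chain stays below the single element $a_0$ of known finite height.
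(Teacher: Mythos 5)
Your proof is correct and follows essentially the same argument as the paper: locally finite height forbids an infinite strictly descending chain, and closure under $\wedge$ supplies the descent. The paper organizes it slightly differently---it first extracts a minimal element $m$ of $A$ using only the descending-chain argument (comparing the finite heights $\height_S(a_0) > \height_S(a_1) > \dotsb$ rather than localizing below $a_0$), and then upgrades minimal to minimum via $x \ge x \wedge m = m$---but this is a cosmetic reorganization of the same two ingredients.
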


\begin{proof}
  First, we show that $A$ has a minimal element.
  If $A$ has no minimal elements, then we can take
  an infinite strictly decreasing sequence $a_0 > a_1 > \dotsb$ of $A$.
  Thus we obtain
  \begin{equation*}
    \infty > \height_S(a_0) > \height_S(a_1) > \dotsb,
  \end{equation*}
  but $\height_S(x)$ is non-negative for any $x \in S$,
  which is a contradiction.

  Thus $A$ has a minimal element $m$.
  Then $m$ is the minimum element of $A$,
  since we have $x \ge x \wedge m = m$ for $x \in A$.
\end{proof}

\begin{proposition}
  Let $S$ be a non-empty semilattice of locally finite height.
  Then the following hold.
  \begin{enumerate}
    \item $S$ has a minimum elements, which will be denoted by $0$.
    \item Any bounded pair of $S$ has the least upper bound.
    \item $S^{\le x}$ is a bounded lattice for any $x \in S$,
  \end{enumerate}
\end{proposition}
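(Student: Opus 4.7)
The plan is to derive all three items from the preceding lemma by choosing the right $\wedge$-closed subsets.

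For (1), I would apply the lemma directly to $A = S$: since $S$ is a semilattice, it is closed under $\wedge$, and it is non-empty by assumption, so the lemma produces a minimum element, which we call $0$.

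For (2), given a bounded pair $x, y \in S$ with common upper bound $u$, I would consider the set of common upper bounds
\begin{equation*}
  A = \{\, z \in S \;|\; x \le z, \ y \le z \,\}.
\end{equation*}
This set is non-empty because $u \in A$. The key check is that $A$ is closed under $\wedge$: if $z_1, z_2 \in A$, then $x$ is a lower bound of $\{z_1, z_2\}$, hence $x \le z_1 \wedge z_2$, and similarly $y \le z_1 \wedge z_2$, so $z_1 \wedge z_2 \in A$. Applying the lemma to $A$ yields a minimum of $A$, which is exactly the least upper bound $x \vee y$.

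For (3), fix $x \in S$ and set $L = S^{\le x}$. By (1), $0 \in L$ and is the minimum, and $x$ itself is the maximum, so $L$ is bounded. For any $y, z \in L$, the meet $y \wedge z$ in $S$ satisfies $y \wedge z \le y \le x$, so it lies in $L$ and serves as the meet there. Moreover $x$ is a common upper bound of $y$ and $z$, so (2) gives a least upper bound $y \vee z$ in $S$; since $x$ is an upper bound, $y \vee z \le x$, and $y \vee z \in L$ is then the join of $y$ and $z$ in $L$. Thus $L$ is a bounded lattice.

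There is no real obstacle here; the only mild subtlety is checking that $A$ in step (2) is $\wedge$-closed, which is an immediate consequence of the universal property of the meet. Once this is observed, everything reduces to cleanly applying the preceding lemma.
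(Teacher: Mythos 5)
Your proof is correct and follows essentially the same route as the paper: apply the preceding lemma to $S$ itself for (1), to the ($\wedge$-closed, non-empty) set of common upper bounds for (2), and combine (1) and (2) for (3). Note that the paper's one-line sketch writes ``$S^{\le x} \cap S^{\le y}$'' where it evidently means the set of common upper bounds $S^{\ge x} \cap S^{\ge y}$, which is exactly the set you use, so your version is if anything the more carefully stated one.
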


\begin{proof}
  To show (1), apply the previous lemma to $S$ itself.
  To show (2), similarly consider $S^{\le x} \cap S^{\le y}$.
  (3) follows from (1) and (2).
\end{proof}

If $x, y \in S$ are bounded, we denote their least upper bound by $x \vee y$.
We can see $\vee$ as a partial binary operator on $S$.
For a property $(\Phi)$ for bounded lattices,
we say $S$ is locally $(\Phi)$ if $S^{\le x}$ satisfies $(\Phi)$
for any $x \in S$.
For example, a locally distributive semilattice is a semilattice $S$
such that $S^{\le x}$ is a distributive lattice for any $x \in S$.
We say that $S$ is a \emph{flag semilattice} if
any pairwise bounded triple of elements of $S$ is bounded.

\subsection*{Order complexes}
Let $P$ be a poset.
The \emph{order complex} $\OC P$ of $P$ is defined to be
the abstract simplicial complex whose faces are the finite chains of $P$.
We denote the geometric realization $\GR{\OC P}$ of the order complex
simply by $\GR P$,
and we sometimes refer to the geometric realization of the order complex of $P$
simply as the order complex of $P$.

\subsection*{Face posets}
Let $K$ be an abstract simplicial complex.
The inclusion defines a partial order on $K$.
The poset $(K, {\subset})$ is called the \emph{face poset} of $K$,
and denoted by $F(K)$.
Usually, the face poset means the induced subposet of $F(K)$
consisting of the non-empty faces.
But our definition, $F(K)$ contains the empty face as a minimum element
unless $K$ itself is empty.

\section{A representation theorem for locally distributive semilattices}
\label{sec:rep}

It is well-known that any distributive lattice of finite height
is isomorphic to the poset of the down sets of a finite poset,
which is known as Birkhoff's representation theorem for distributive lattices
(see \cite[Theorem~107]{Gra}).
In this section, we discuss its extension
for locally distributive semilattices.
The basic idea of this extension can be seen in Section~7.6 of \cite{CCHO}.

Let $S$ be a non-empty locally distributive semilattice
of locally finite height.
An element $x$ of $S$ is \emph{join-reducible}, or simply \emph{reducible},
if there exist $y, z \in S^{< x}$ such that $x = y \vee z$.
An element $x$ of $S$ is \emph{join-irreducible}, or simply \emph{irreducible},
if $x$ is neither reducible nor equal to $0$.
Let $\Irr S$ denote the induced subposet
consisting of the irreducible elements of $S$.

\begin{proposition}
  For $x \in S$, the following are equivalent.
  \begin{itemize}
    \item $x$ is irreducible.
    \item For any finite subset $F$ of $S^{\le x}$,
      $\bigvee F = x$ implies $x \in F$.
  \end{itemize}
\end{proposition}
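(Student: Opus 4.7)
The plan is to prove both directions directly; distributivity of $S^{\le x}$ is not actually needed here, only the lattice structure of $S^{\le x}$ (which is guaranteed by the proposition immediately preceding) and the definition of irreducibility. Throughout, I adopt the convention $\bigvee \emptyset = 0$, which is legitimate because $S$ has a minimum.

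For the ``second condition implies $x$ irreducible'' direction, I would argue by contrapositive. If $x = 0$, take $F = \emptyset$: then $\bigvee F = 0 = x$ but $x \notin F$, violating the condition. If $x$ is join-reducible, pick $y, z \in S^{<x}$ with $y \vee z = x$, and take $F = \{y, z\} \subset S^{\le x}$; then $\bigvee F = x$ but $x \notin F$. Either way, the second condition fails, completing this direction.

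For the ``$x$ irreducible implies the second condition'' direction, I would induct on $\# F$. Since $x \neq 0$, the case $F = \emptyset$ cannot satisfy $\bigvee F = x$. If $\# F = 1$, say $F = \{y\}$, then $y = \bigvee F = x$ and hence $x \in F$. For $\# F \ge 2$, fix some $y \in F$, set $F' = F \setminus \{y\}$, and let $z = \bigvee F'$, computed inside the bounded lattice $S^{\le x}$. Then $x = \bigvee F = z \vee y$, with both $z$ and $y$ in $S^{\le x}$. Irreducibility of $x$ forbids $z, y$ both lying in $S^{<x}$, so either $y = x$ (giving $x \in F$) or $z = x$ (so that the inductive hypothesis applied to $F'$ yields $x \in F' \subset F$).

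I do not expect any real obstacle: the argument is purely definition-chasing and a one-line induction. The only subtle point is the handling of the empty subset, which forces $x \neq 0$ and must be treated consistently with the convention $\bigvee \emptyset = 0$; once that is clarified, the rest is immediate.
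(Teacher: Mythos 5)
Your proof is correct and is exactly the induction on $\#F$ that the paper's one-line proof alludes to, with the base cases, the contrapositive direction, and the convention $\bigvee \emptyset = 0$ spelled out explicitly. No issues.
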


\begin{proof}
  The proof is done by induction on $\# F$.
\end{proof}

\begin{lemma} \label{lem:bdd->fin}
  Let $A$ be a set of irreducible elements of $S$.
  If $A$ is bounded in $S$, then $A$ is finite.
\end{lemma}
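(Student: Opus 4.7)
The plan is to argue by induction on $n = \height_S(u)$, where $u$ is a chosen upper bound of $A$. The base case $n = 0$ is immediate: then $u = 0$, and since $0$ is by definition not irreducible, $A$ must be empty. For the inductive step I split into two cases according to whether $u$ is join-reducible or join-irreducible in $S^{\le u}$.

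Suppose first that $u$ is reducible, so $u = y \vee z$ with $y, z < u$. For any $a \in A$, the distributive law in $S^{\le u}$ gives
\begin{equation*}
  a = a \wedge u = a \wedge (y \vee z) = (a \wedge y) \vee (a \wedge z),
\end{equation*}
and irreducibility of $a$ forces $a = a \wedge y$ or $a = a \wedge z$, i.e.\ $a \le y$ or $a \le z$. Hence $A = (A \cap S^{\le y}) \cup (A \cap S^{\le z})$, where each piece is a set of irreducibles bounded by an element of height strictly less than $n$, so each is finite by the inductive hypothesis.

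Suppose instead that $u$ is irreducible. The key claim is that $D = \{\, v \in S \colon v < u \,\}$ admits a maximum $u^-$. Indeed, $D$ is nonempty since $u \neq 0$; $D$ is closed under $\vee$, because for $v, w \in D$ the join $v \vee w$ exists in $S^{\le u}$ and cannot equal $u$ lest $u$ be reducible; and $D$ satisfies ACC since $S^{\le u}$ has finite height. An argument analogous to the preceding lemma then produces the maximum $u^-$, and necessarily $\height_S(u^-) \le n - 1$. Since every $a \in A$ with $a \ne u$ lies in $D \subseteq S^{\le u^-}$, the inductive hypothesis forces $A \setminus \{u\}$ to be finite, and hence $A$ itself is finite.

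The principal obstacle is the irreducible case: one must upgrade the abstract absence of a non-trivial join decomposition of $u$ into the concrete existence of a maximum strict predecessor $u^-$, so that the inductive hypothesis becomes applicable. This is precisely where the hypothesis of locally finite height enters, via the ACC that $D$ inherits from $S^{\le u}$; without it, $D$ need not have a maximum even when $u$ is irreducible.
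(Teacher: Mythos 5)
Your proof is correct, but it takes a genuinely different route from the paper. The paper argues directly and quantitatively: ordering $a_1, \dots, a_n \in A$ so that $a_i \not\ge a_j$ for $i < j$, it shows via distributivity and irreducibility that the partial joins $b_i = \bigvee_{j \le i} a_j$ form a \emph{strictly} increasing chain in $S^{\le u}$, which immediately yields the sharp bound $\# A \le \height_S(u)$. You instead induct on $\height_S(u)$ and split on whether the upper bound $u$ is reducible or irreducible. Your reducible case uses the same two ingredients as the paper (distribute $a$ over a join decomposition of $u$, then use irreducibility of $a$ to pick a side), but applied to the bound $u$ rather than to the elements of $A$; your irreducible case requires the additional structural observation that an irreducible $u$ has a maximum strict predecessor $u^-$, proved via join-closure of $S^{< u}$ together with the ascending chain condition --- the order-dual of the paper's earlier lemma on $\wedge$-closed subsets. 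The trade-off: the paper's argument is shorter and gives the explicit estimate $\# A \le \height_S(u)$, whereas your recursion only yields finiteness (the bound it produces can degrade exponentially through the reducible case); on the other hand, your argument isolates the existence of $u^-$ for irreducible elements, a fact of some independent interest in this setting.
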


\begin{proof}
  Take $u \in S$ such that $A \subset S^{\le u}$.
  It is enough to show $\# A \le \height_S(u)$.
  Otherwise we can take $n > \height_S(u)$ and $a_1, \dots, a_n \in A$
  such that $a_i \not\ge a_j$ for $i < j$.
  Set $b_i = \bigvee_{j \le i} a_j$ for $i = 0, \dots, n$.
  Clearly, the sequence $b_0, \dots, b_n$ is weakly increasing.
  If the equation $b_{j - 1} = b_j$ holds, then we have
  \begin{equation*}
    a_j
    = b_j \wedge a_j
    = b_{j - 1} \wedge a_j
    = \Bigl( \bigvee_{i < j} a_i \Bigr) \wedge a_j
    = \bigvee_{i < j} (a_i \wedge a_j).
  \end{equation*}
  Since $a_j$ is irreducible, there exists $i < j$
  such that $a_i \wedge a_j = a_j$, that is, $a_i \ge a_j$,
  which contradicts the assumption for $a_1, \dots, a_n$.
  Hence the sequence $b_0, \dots, b_n$ is strictly increasing,
  and thus forms a chain in $S^{\le u}$ of length $n$,
  which contradicts the assumption $n > \height_S(u)$.
\end{proof}

A \emph{down set} of a poset $P$ is a subset $I$ of $P$ such that
$x \le y$ and $y \in I$ imply $x \in I$ for any $x, y \in P$.
Let $\Down P$ denote the set of the down sets of $P$.
For a subset $\sigma$ of $P$, we define
\begin{equation*}
  \overline \sigma = \{\, x \in P \;|\;
  \text{there exists $y \in \sigma$ such that $x \le y$} \,\}.
\end{equation*}
Then $\overline \sigma$ is the smallest down set of $P$
which contains $\sigma$.

Let $K$ be an abstract simplicial complex,
and fix a partial order $\le$ on $V(K)$.
A face of $K$ is said to be a \emph{down face}
if it is a down set of $V(K)$ with respect to this order.
Let $DF(K)$ denote the induced subposet of $F(K)$ consisting of the down faces.
This partial order $\le$ on $V(K)$ is said to be a \emph{compatible order}
on $K$ if any face of $K$ is contained in some down face of $K$.
Equivalently, $\overline \sigma$ is a face of $K$ for any face $\sigma$ of $K$.

\begin{proposition}
  Let $K$ be an abstract simplicial complex, and fix a partial order on $V(K)$.
  Then $F(K)$ is a locally Boolean semilattice of locally finite height,
  and $DF(K)$ is a locally distributive semilattice of locally finite height.
  Moreover, the meets, the joins and the heights in $DF(K)$
  coincide with the restrictions of those in $F(K)$.
\end{proposition}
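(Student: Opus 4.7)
My plan is to exhibit the meet in both $F(K)$ and $DF(K)$ as set-theoretic intersection and then to identify each principal ideal with a classical lattice. First I will note that for any faces $\sigma, \tau$ of $K$, the intersection $\sigma \cap \tau$ is a subset of $\sigma$ and hence a face, and that the intersection of two down sets of $V(K)$ is again a down set. So both $F(K)$ and $DF(K)$ are meet-semilattices with meet given by $\cap$, and the inclusion $DF(K) \hookrightarrow F(K)$ preserves meets.

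Next I will analyse the principal ideals. For $\sigma \in F(K)$ every subset of $\sigma$ is a face, so $F(K)^{\le \sigma}$ is the Boolean lattice $2^\sigma$, which is finite of height $|\sigma|$; this gives that $F(K)$ is locally Boolean of locally finite height. For $\sigma \in DF(K)$ the key point is that a subset $\tau \subseteq \sigma$ is a down set of $V(K)$ if and only if it is a down set of the induced subposet $\sigma$, since $\sigma$ itself is already down-closed. Hence $DF(K)^{\le \sigma}$ coincides as a poset with $\Down(\sigma)$, which is a classical finite distributive lattice with $\cup$ and $\cap$ as join and meet; its maximal chains, produced from any linear extension of $\sigma$ by successively adjoining minimal elements, have length $|\sigma|$. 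So $DF(K)$ is locally distributive of locally finite height.

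Finally, the coincidence of meets, joins, and heights will follow quickly. Meets in both posets are intersections. If $\sigma, \tau \in DF(K)$ are bounded above in $F(K)$ by some face $\rho$, then $\sigma \cup \tau \subseteq \rho$ is a face and a union of down sets, hence a down face that serves as the join in both $F(K)$ and $DF(K)$; in particular the two joins agree whenever they exist. By the descriptions of the principal ideals, $\height_{F(K)}(\sigma) = \height_{DF(K)}(\sigma) = |\sigma|$ for every $\sigma \in DF(K)$. There is no real obstacle here: once the identification $DF(K)^{\le \sigma} \cong \Down(\sigma)$ is in hand, everything else is immediate from the standard facts that $2^\sigma$ is Boolean and that the down sets of any finite poset form a distributive lattice.
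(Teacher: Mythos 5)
Your proof is correct and follows essentially the same route as the paper's (which simply notes that meets are intersections, joins are unions when they exist, and heights are face cardinalities); you have merely supplied the details, in particular the identification of $F(K)^{\le \sigma}$ with the power set of $\sigma$ and of $DF(K)^{\le \sigma}$ with $\Down(\sigma)$.
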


\begin{proof}
  The meet is given by the intersection,
  the join by the union if exists,
  and the height by the size of a face, which is finite.
\end{proof}

\begin{theorem} \label{thm:LDS}
  Let $S$ be a locally distributive semilattice of locally finite height.
  Then there exist an abstract simplicial complex $K$
  and a compatible order on $K$ such that $DF(K)$ is isomorphic to $S$.
\end{theorem}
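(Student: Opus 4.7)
The plan is to take $K$ to be the abstract simplicial complex with vertex set $V(K) = \Irr S$ whose faces are the subsets of $\Irr S$ that are bounded in $S$ (automatically finite by Lemma~\ref{lem:bdd->fin}), equipped with the compatible order inherited from $S$. The candidate isomorphism $\phi \colon S \to DF(K)$ is given by $\phi(x) = \{\, a \in \Irr S \mid a \le x \,\}$.

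First I would verify the elementary housekeeping: $K$ is indeed a simplicial complex because any subset of a bounded set is bounded, and the chosen order is compatible because for any face $\sigma \subset S^{\le u}$ the down-closure $\overline\sigma$ still lies in $S^{\le u}$ and hence is a face. Next, $\phi(x)$ is visibly a down face (bounded by $x$, closed downward in $\Irr S$), and $\phi$ is order-preserving. The key input for the remaining properties is that every $x \in S$ is the join of the irreducibles below it, i.e., $x = \bigvee \phi(x)$. This follows by applying classical finite Birkhoff representation theory to the bounded distributive lattice $S^{\le x}$, which has finite height by the earlier proposition, together with the observation that the irreducibles of $S^{\le x}$ are exactly the elements of $\phi(x)$. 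Injectivity of $\phi$ and order-preservation of its inverse (on bounded pairs) then follow directly, since $\phi(x) \subset \phi(y)$ gives $x = \bigvee \phi(x) \le \bigvee \phi(y) = y$.

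The main obstacle, and the step that uses local distributivity essentially, is surjectivity of $\phi$. Given a down face $\sigma$ of $K$, I would let $x = \bigvee \sigma$ (which exists because $\sigma$ is bounded) and show $\phi(x) = \sigma$. The inclusion $\sigma \subset \phi(x)$ is clear. For the reverse inclusion, take $a \in \phi(x)$; then $a$ is irreducible with $a \le \bigvee \sigma$, so by distributivity in $S^{\le x}$,
\begin{equation*}
  a = a \wedge \bigvee \sigma = \bigvee_{b \in \sigma} (a \wedge b).
\end{equation*}
Irreducibility of $a$, combined with the characterization in the first proposition of this section, forces $a = a \wedge b$ for some $b \in \sigma$, hence $a \le b$; since $\sigma$ is a down set in $\Irr S$, this yields $a \in \sigma$, completing the proof.
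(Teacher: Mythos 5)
Your proposal is correct and takes essentially the same approach as the paper: the same complex $K$ on $\Irr S$ with the inherited compatible order, the same map $\phi(x) = (\Irr S)^{\le x}$, and the identical distributivity-plus-irreducibility argument for surjectivity. The only (harmless) divergence is that you establish $x = \bigvee \phi(x)$ by invoking classical Birkhoff representation for the finite-height distributive lattice $S^{\le x}$ together with the (routine but worth verifying) identification $\Irr(S^{\le x}) = (\Irr S)^{\le x}$, whereas the paper proves this directly by induction on $\height_S(x)$.
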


\begin{proof}
  Let $K$ be the abstract simplicial complex whose faces are
  the subsets $\sigma$ of $\Irr S$ bounded in $S$.
  The finiteness of faces of $K$ follows from Lemma~\ref{lem:bdd->fin}.
  The induced order on $V(K) = \Irr S$ is a compatible order on $K$,
  since $\sigma \subset S^{\le u}$
  implies $\overline \sigma \subset S^{\le u}$.

  Let $\phi \colon S \to DF(K)$
  and $\psi \colon DF(K) \to S$ be the maps defined by
  \begin{align*}
    \phi(x) &= (\Irr S)^{\le x} && (x \in S) \\
    \psi(\sigma) &= \bigvee \sigma && (\sigma \in DF(K)).
  \end{align*}
  Clearly, $\phi$ and $\psi$ are well-defined and order-preserving.
  We will show these maps are inverses of each other.

  It is clear that $\psi \circ \phi(x) \le x$ holds for any $x \in S$.
  We now show that the equation holds by induction on $\height_S(x)$.
  The case either $x = 0$ or $x \in \Irr S$ is trivial.
  Assume that $x$ is reducible,
  that is, $x = y \vee z$ for some $y, z \in S^{< x}$.
  By the induction hypothesis, we have
  \begin{equation*}
    \psi \circ \phi(x)
    = \psi \circ \phi(y \vee z)
    \ge \psi \circ \phi(y) \vee \psi \circ \phi(z)
    = y \vee z
    = x.
  \end{equation*}

  It is clear that $\phi \circ \psi(\sigma) \supset \sigma$
  for any $\sigma \in DF(K)$.
  For $x \in \phi \circ \psi(\sigma) = (\Irr S)^{\le \bigvee \sigma}$,
  we have
  \begin{equation*}
    x
    = x \wedge \Bigl( \bigvee \sigma \Bigr)
    = \bigvee_{y \in \sigma} (x \wedge y).
  \end{equation*}
  Since $x$ is irreducible, there exists $y \in \sigma$
  such that $x = x \wedge y$, that is, $x \le y$.
  Since $\sigma$ is a down set, $x$ belongs to $\sigma$.
\end{proof}

\begin{corollary} \label{cor:LBS}
  Let $S$ be a locally Boolean semilattice of locally finite height.
  Then there exists an abstract simplicial complex $K$
  such that $F(K)$ is isomorphic to $S$.
\end{corollary}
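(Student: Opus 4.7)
The plan is to specialize Theorem~\ref{thm:LDS} to the locally Boolean setting. Since every Boolean lattice is distributive, $S$ is in particular locally distributive, so the theorem supplies an abstract simplicial complex $K$ together with a compatible order on $V(K) = \Irr S$ such that $S \cong DF(K)$. It will then suffice to show that this induced order on $\Irr S$ is trivial, for then every subset of $V(K)$ is automatically a down set and $F(K) = DF(K)$.

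The key step is therefore the claim that $\Irr S$ is an antichain in $S$. The argument I would give is the following. In any Boolean lattice of rank at least two, the maximum element is the join of any atom with its complement, each of which is proper, and is therefore join-reducible. Hence, for $y \in \Irr S$, applying this to the Boolean lattice $S^{\le y}$ (noting that $y$ is the maximum of $S^{\le y}$ and that reducibility of $y$ in $S$ agrees with reducibility in $S^{\le y}$) forces $S^{\le y}$ to have rank at most one; that is, $S^{\le y} = \{0, y\}$. Consequently, any $x \in \Irr S$ with $x \le y$ lies in $\{0, y\}$, and since $x \neq 0$ by irreducibility, $x = y$. So no two distinct irreducible elements of $S$ are comparable.

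With the induced order on $\Irr S$ thus trivial, every subset of $V(K)$ is a down set, so $DF(K) = F(K)$ as posets (and, by the proposition preceding Theorem~\ref{thm:LDS}, with the same meets, joins, and heights). The isomorphism $S \cong DF(K)$ from Theorem~\ref{thm:LDS} therefore yields the desired isomorphism $S \cong F(K)$. I do not anticipate any genuine obstacle: once the antichain observation for $\Irr S$ is made, the corollary is an immediate consequence of Theorem~\ref{thm:LDS}.
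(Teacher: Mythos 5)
Your proof is correct and follows essentially the same route as the paper: both reduce the corollary to the claim that $\Irr S$ is an antichain (so that $DF(K) = F(K)$ in Theorem~\ref{thm:LDS}), and both establish that claim via complementation in the Boolean intervals $S^{\le x}$. The only cosmetic difference is that you prove the slightly stronger statement that every irreducible element is an atom, whereas the paper argues directly from a comparable pair $x > y$ of irreducibles.
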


\begin{proof}
  It is enough to show that $\Irr S$ is an antichain,
  that is, there is no non-trivial ordering.
  Let $x, y \in \Irr S$ with $x > y$.
  Since $S^{\le x}$ is Boolean, there exists $z \in S^{\le x}$
  such that $y \wedge z = 0$ and $y \vee z = x$.
  Since $x$ is irreducible and $y < x$, we have $z = x$.
  Thus we have
  \begin{equation*}
    0 = y \wedge z = y \wedge x = y,
  \end{equation*}
  which contradicts to the assumption that $y$ is irreducible.
\end{proof}

\begin{proposition}
  Let $K$ be an abstract simplicial complex, and fix a compatible order on $K$.
  Then the following are equivalent.
  \begin{enumerate}
    \item $K$ is a flag complex.
    \item $F(K)$ is a flag semilattice.
    \item $DF(K)$ is a flag semilattice.
  \end{enumerate}
\end{proposition}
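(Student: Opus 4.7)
The plan is to unwind the definitions so that the three conditions become transparently comparable, noting that in both $F(K)$ and $DF(K)$ the meet is intersection and the join (when it exists) is union, and that a pair $\sigma, \tau$ is bounded precisely when $\sigma \cup \tau$ is a face of $K$. Under this translation, the flag condition for $F(K)$ says: for any three faces $\sigma_1, \sigma_2, \sigma_3$ of $K$, if all three pairwise unions are faces of $K$, then the triple union is a face as well. This is exactly Proposition~\ref{prp:flag}(1), so $(1) \Leftrightarrow (2)$ is immediate.

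For $(2) \Rightarrow (3)$, observe that $DF(K)$ is an induced subposet of $F(K)$ whose meets and joins agree with those of $F(K)$. If down faces $\sigma_1, \sigma_2, \sigma_3$ are pairwise bounded in $DF(K)$, then they are pairwise bounded in $F(K)$, so by (2) the union $\sigma_1 \cup \sigma_2 \cup \sigma_3$ is a face of $K$; since a union of down sets of $V(K)$ is a down set, it is in fact a down face, giving a bound in $DF(K)$.

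The main work is the converse $(3) \Rightarrow (1)$, and the key trick here is to exploit compatibility of the order to pass from arbitrary faces to down faces by down-closure. Given faces $\sigma_1, \sigma_2, \sigma_3$ of $K$ with each pairwise union a face, compatibility gives that $\overline{\sigma_i}$ is a face, and in fact a down face, for each $i$. Moreover, $\overline{\sigma_i \cup \sigma_j} = \overline{\sigma_i} \cup \overline{\sigma_j}$ is again a down face of $K$ by compatibility applied to $\sigma_i \cup \sigma_j$. Hence $\overline{\sigma_1}, \overline{\sigma_2}, \overline{\sigma_3}$ form a pairwise bounded triple in $DF(K)$. By (3), the union $\overline{\sigma_1} \cup \overline{\sigma_2} \cup \overline{\sigma_3}$ is a face of $K$, and it contains $\sigma_1 \cup \sigma_2 \cup \sigma_3$, so the latter is also a face. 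Applying Proposition~\ref{prp:flag}(1) once more, $K$ is a flag complex.

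The subtle step is $(3) \Rightarrow (1)$: one might worry that the flag condition on down faces is strictly weaker than on arbitrary faces, but compatibility of the order is precisely what rules this out, by allowing every face to be replaced by a containing down face without leaving $K$. The remaining verifications are routine set-theoretic manipulations on down sets.
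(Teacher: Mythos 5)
Your proposal is correct and follows essentially the same route as the paper: the nontrivial implication is handled in both by replacing each face $\sigma_i$ with its down-closure $\overline{\sigma_i}$, using compatibility and the identity $\overline{\sigma_i \cup \sigma_j} = \overline{\sigma_i} \cup \overline{\sigma_j}$ to get a pairwise bounded triple in $DF(K)$, and then pulling the bound back. (The paper states this as $(3) \Rightarrow (2)$ rather than $(3) \Rightarrow (1)$, but since $(1) \Leftrightarrow (2)$ is the same definitional unwinding in both, the difference is only cosmetic.)
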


\begin{proof}
  $(1) \Leftrightarrow (2)$ and $(2) \Rightarrow (3)$ are trivial.
  We now show $(3) \Rightarrow (2)$.
  Assume that $DF(K)$ is a flag semilattice.
  Let $\sigma_1, \sigma_2, \sigma_3$ be pairwise bounded elements of $F(K)$.
  Then $\overline \sigma_1, \overline \sigma_2, \overline \sigma_3$ are
  pairwise bounded in $DF(K)$,
  since an upper bound of $\overline \sigma_i$ and $\overline \sigma_j$
  is given by $\overline{\sigma_i \cup \sigma_j}$.
  Thus there exists an upper bound of
  $\{ \overline \sigma_1, \overline \sigma_2, \overline \sigma_3 \}$
  in $DF(K)$, which is also an upper bound of
  $\{ \sigma_1, \sigma_2, \sigma_3 \}$ in $F(K)$.
\end{proof}

\section{Metric spaces} \label{sec:met}

The metric on $\R^n$ defined by $d(x, y) = \sqrt{\sum_{i = 1}^n (x_i - y_i)^2}$
is called the \emph{Euclidean metric},
and $\R^n$ with the Euclidean metric is called the \emph{Euclidean space},
which will be denoted by $\E^n$.
A metric space $X$ is said to be \emph{complete}
if any Cauchy sequence in $X$ converges.
For a metric space $(X, d_X)$ and a subset $A$ of $X$,
the restriction of $d_X$ on $A \times A$ is called the \emph{induced metric}
on $A$.
For two metric spaces $X$ and $Y$, a map $f \colon X \to Y$ is
\emph{non-expanding} if $d_Y(f(x), f(x')) \le d_X(x, x')$ holds
for any $x, x' \in X$,
and $f$ is \emph{distance preserving} if $d_Y(f(x), f(x')) = d_X(x, x')$ holds
for any $x, x' \in X$.
Clearly, a distance-preserving map is injective.
A bijective distance-preserving map is called an \emph{isometry}.
Two metric spaces are called \emph{isometric}
if an isometry between them exists.
For a metric space $X$ and $x, y \in X$,
a \emph{geodesic path} from $x$ to $y$ in $X$ is a distance-preserving map
$\gamma \colon [0, \ell] \to X$ which sends the endpoints $0$ and $\ell$
to $x$ and $y$, respectively.
Here $[0, \ell]$ denotes the closed interval of $\R$ with the standard metric,
that is, $d_{[0, \ell]}(s, t) = \lvert s - t \rvert$.
In this case, we have $d_X(x, y) = \ell$.
A metric space $X$ is sait to be \emph{geodesic} if for any $x, y \in X$
there exists a geodesic path from $x$ to $y$ in $X$.

\subsection*{CAT(0) properties}
A geodesic metric space $X$ is \emph{CAT(0)} if for any $x, y, z \in X$
and any geodesic path $\gamma \colon [0, \ell] \to X$ from $x$ to $y$ in $X$,
the inequality
\begin{equation*}
  d_X(\gamma(t \ell), z)^2
  \le t \cdot d_X(y, z)^2
  + (1 - t) \cdot d_X(x, z)^2
  - t (1 - t) \cdot d_X(x, y)^2
\end{equation*}
holds for all $t \in [0, 1]$.
Roughly speaking, this inequality means that the any triangle in $X$
whose edges are geodesic paths is at least as thin as the comparison triangle
of the same side lengths in the Euclidean space.
We say a metric space $X$ has \emph{non-positive curvature},
or is \emph{locally CAT(0)},
if for any $x \in X$ there exists $r > 0$ such that
the $r$-open ball $\{\, y \in X \;|\; d_X(x, y) < r \,\}$
around $x$ is a CAT(0) geodesic space with the induced metric.

Let us note that if a geodesic space $X$ is CAT(0),
then $X$ is uniquely geodesic, that is, for any pair of points of $X$
there uniquely exists a geodesic path between them.
Since the unique geodesic path can be taken continuously with respect to
the end points, any non-empty CAT(0) geodesic space must be contractible.

\begin{theorem}[The Cartan-Hadamard Theorem {\cite[II.4.1(2)]{BH}}]
  \label{thm:CH}
  Let $X$ be a complete metric space.
  Then the following are equivalent.
  \begin{enumerate}
    \item $X$ has non-positive curvature, and is simply connected.
    \item $X$ is a CAT(0) geodesic space.
  \end{enumerate}
\end{theorem}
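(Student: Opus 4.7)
The implication $(2) \Rightarrow (1)$ is the easy direction. If $X$ is CAT(0) geodesic, then open balls in $X$ are convex, since the unique geodesic between two points of a ball stays inside the ball, so each ball inherits the CAT(0) inequality via the induced metric; hence $X$ is locally CAT(0). Simple connectedness is then a consequence of contractibility: the CAT(0) inequality forces the unique geodesic between two points to vary continuously in its endpoints, which yields a deformation retraction of $X$ onto any chosen base point.

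For $(1) \Rightarrow (2)$, the plan is the classical local-to-global argument. First I would introduce \emph{local geodesics}, i.e.\ paths that are geodesic on every sufficiently short subinterval. At each point the non-positive curvature hypothesis provides a radius on which the CAT(0) inequality holds, and in particular on which geodesics are unique; combined with completeness, a continuation argument then produces a local geodesic in each homotopy class of paths with prescribed endpoints.

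The key rigidity lemma comes next: two local geodesics sharing endpoints and homotopic through local geodesics must coincide. The idea is that the distance between two ``nearby'' local geodesics is a locally convex function of the parameter, so once it vanishes at one endpoint it must vanish identically. Since $X$ is simply connected, this produces a unique local geodesic between any two points, and such a path is automatically a global geodesic, so $X$ is uniquely geodesic.

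The main obstacle is upgrading the local CAT(0) inequality to a global one. I would take an arbitrary geodesic triangle in $X$, subdivide it finely enough that each subtriangle is contained in a ball on which the local CAT(0) inequality is already known, and then develop the subtriangles one by one into $\E^2$. Completeness together with compactness of the triangle guarantees that a uniformly fine subdivision exists; a careful side-by-side comparison shows that the developing map is non-expanding, which combined with the local inequalities yields the global CAT(0) comparison. Carrying out this unfolding while controlling the cumulative distortion across many subtriangles is the delicate heart of the argument.
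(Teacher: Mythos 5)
The paper does not prove this statement; it is quoted verbatim as a classical theorem from Bridson--Haefliger \cite[II.4.1]{BH}, so there is no in-paper argument to compare against. Your sketch correctly reproduces the standard proof from that reference --- the easy direction via convexity of balls and geodesic contractibility, and the converse via existence and uniqueness of local geodesics in a homotopy class (using completeness and convexity of the distance between local geodesics) followed by Alexandrov's patchwork to globalize the comparison inequality --- though it remains an outline whose two hardest steps (the continuation argument and the controlled development of subtriangles) are described rather than executed, which is reasonable for a cited result of this scale.
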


\subsection*{Euclidean polyhedral complexes}
In this subsection, we review the definition and basic properties of
Euclidean polyhedral complexes.
Roughly speaking, Euclidean polyhedral complexes are obtained from
Euclidean polytopes by gluing them along isometric faces.
We interest in conditions for Euclidean polyhedral complexes
to have (locally) CAT(0) metric.

A \emph{Euclidean polytope} is a polytope in a Euclidean space
with the induced metric.
A \emph{Euclidean polyhedral complex} is a set $X$ equipped with
a family $\{ (P_\lambda, i_\lambda) \}_{\lambda \in \Lambda}$ of pairs of
a Euclidean polytope $P_\lambda$
and an injection $i_\lambda \colon P_\lambda \to X$
which satisfies the following:
\begin{itemize}
  \item The images of $i_\lambda$ cover $X$, that is,
    $X = \bigcup_{\lambda \in \Lambda} i_\lambda(P_\lambda)$.
  \item Let $\lambda, \lambda' \in \Lambda$ such that
    $i_\lambda(P_\Lambda) \cap i_{\lambda'}(P_{\lambda'}) \neq \emptyset$.
    Then the inverse image of the intersection under $i_\lambda$
    is a face of $P_\lambda$,
    similarly the inverse image under $i_{\lambda'}$
    is a face of $P_{\lambda'}$,
    and the induced bijection
    \begin{equation*}
      i_{\lambda'}^{-1} \circ i_\lambda \colon
      i_\lambda^{-1}(i_\lambda(P_\lambda) \cap i_{\lambda'}(P_{\lambda'}))
      \to
      i_{\lambda'}^{-1}(i_\lambda(P_\lambda) \cap i_{\lambda'}(P_{\lambda'}))
    \end{equation*}
    is an isometry with respect to the induced metrics.
\end{itemize}

The maps $\{ i_\lambda \}_{\lambda \in \Lambda}$ are called
\emph{face maps} of $X$,
and their images are called \emph{faces} of $X$.
The restriction of $i_\lambda$ on a face of $P_\lambda$
is also called a face map of $X$,
and its image is also called a face of $X$.

By definition, our Euclidean polyhedral complexes are regular,
that is, all face maps are injective.
Moreover, our Euclidean polyhedral complexes are simple, that is,
any two faces intersect in at most one face of them.

For $x, y \in X$,
a \emph{string} from $x$ to $y$ in $X$ is a finite sequence
$\Sigma = \{ (\lambda_i, x_i, y_i) \}_{i = 1}^m$
of triples which satisfy the following.
\begin{itemize}
  \item $\lambda_i \in \Lambda_i$ for $i = 1, \dots, m$
  \item $x_i, y_i \in P_{\lambda_i}$ for $i = 1, \dots, m$
  \item $x = i_{\lambda_1}(x_1)$
  \item $i_{\lambda_i}(y_i) = i_{\lambda_{i + 1}}(x_{i + 1})$
    for $i = 1, \dots, m - 1$
  \item $i_{\lambda_m}(y_m) = y$
\end{itemize}
The \emph{length} of a string $\Sigma = \{ (\lambda_i, x_i, y_i) \}_{i = 1}^m$
is defined by
\begin{equation*}
  \ell(\Sigma) = \sum_{i = 1}^m d_{P_{\lambda_i}}(x_i, y_i).
\end{equation*}
The \emph{intrinsic pseudo-metric} on $X$ is defined by
\begin{equation*}
  d_X(x, y) = \inf \{\, \ell(\Sigma) \;|\;
  \text{$\Sigma$ is a string from $x$ to $y$ in $X$} \,\}.
\end{equation*}
If there is no string from $x$ to $y$ in $X$,
we define $d_X(x, y) = \infty$.
The intrinsic pseudo-metric can be characterized as follows:
for any pseudo-metric space $Z$ and any map $f \colon X \to Z$,
$f$ is non-expanding if and only if
$f \circ i_\lambda \colon P_\lambda \to Z$ is non-expanding
for each $\lambda \in \Lambda$.
Equivalently, the intrinsic pseudo-metric is the largest pseudo-metric
such that all face maps $i_\lambda \colon P_\lambda \to X$ are non-expanding.
Let us note that any string $\Sigma = \{ (\lambda_i, x_i, y_i) \}_{i = 1}^m$
induces a path in $X$ by concatenating the line segment $[x_i, y_i]$
in $P_{\lambda_i}$.
We say $X$ is \emph{connected} if any pair of points of $X$
can be connected by a string in $X$.
We say that $X$ has \emph{finite shapes} if the number of isometry types
of $\{\, P_\lambda \;|\; \lambda \in \Lambda \,\}$ is finite.
Bridson showed the following.

\begin{theorem}[{\cite[Chapter I.7]{BH}}] \label{thm:FS}
  If $X$ is a connected Euclidean polyhedral complex of finite shapes,
  then the intrinsic pseudo-metric is a metric,
  and $X$ is a complete geodesic metric space.
  Moreover, any geodesic path in $X$ is obtained from a string.
\end{theorem}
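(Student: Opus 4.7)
The plan is to exploit the finite shapes hypothesis to extract a positive geometric parameter controlling how strings behave near face boundaries. Let $\mathcal{P}$ denote a set of representatives for the finitely many isometry types among the polytopes $P_\lambda$. For each $P \in \mathcal{P}$, each proper face $F$ of $P$, and each vertex $v$ of $P$ not lying on $F$, the Euclidean distance $d_P(v, F)$ is positive; finiteness of $\mathcal{P}$ then yields a uniform $\epsilon > 0$ such that any point of any $P_\lambda$ at Euclidean distance less than $\epsilon$ from a face of $P_\lambda$ actually lies on that face. By the compatibility condition in the definition of a Euclidean polyhedral complex, this bound transfers to strings: a string that leaves the union of faces containing a given point $x$ must contribute length at least $\epsilon$ to cross the gap.

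First, I would use this $\epsilon$ to show that $d_X$ is a genuine metric. Given $x \neq y$, consider any string $\Sigma = \{(\lambda_i, x_i, y_i)\}_{i=1}^m$ from $x$ to $y$ of length $\ell(\Sigma) < \epsilon/2$. The trajectory of $\Sigma$ must then remain in the union of faces meeting $x$, since otherwise some triple would need to cross out of this union, which costs at least $\epsilon$. On this finite union of faces the intrinsic pseudo-metric restricts to the standard piecewise Euclidean metric (via the compatibility of the face maps), which is already a genuine metric for elementary geometric reasons. Thus $d_X(x, y) > 0$ whenever $x \neq y$.

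For the remaining three assertions, I would take a sequence of strings $\Sigma_n$ from $x$ to $y$ with $\ell(\Sigma_n) \to d_X(x, y)$. The uniform $\epsilon$ bounds the number of face transitions per unit length, so after passing to a subsequence I may assume all $\Sigma_n$ share a common combinatorial type $(\lambda_1, \dots, \lambda_m)$. Parametrizing each $\Sigma_n$ by arc length produces a uniformly Lipschitz family of maps from $[0, \ell(\Sigma_n)]$ into the fixed compact union $\bigcup_i i_{\lambda_i}(P_{\lambda_i})$; by Arzel\`a--Ascoli a subsequence converges uniformly to a continuous path $\gamma$. The length of $\gamma$ equals $d_X(x, y)$ by lower semicontinuity of length together with $\ell(\Sigma_n) \to d_X(x, y)$, so $\gamma$ is a geodesic, and by construction it is itself induced by a string built from the same combinatorial data $(\lambda_1, \dots, \lambda_m)$. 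Completeness then follows by a standard argument: a Cauchy sequence eventually lies in a bounded region, where the same combinatorial reduction produces a limit.

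The main obstacle is the combinatorial step that bounds the number of face transitions in a near-minimizing string, which is what allows Arzel\`a--Ascoli to be invoked in a fixed compact piece of $X$. Once that reduction is in place, the positivity of $d_X$, the completeness of $X$, the existence of geodesics, and the recovery of every geodesic from a string all follow in parallel from the uniform geometric parameter $\epsilon$ provided by the finite shapes hypothesis.
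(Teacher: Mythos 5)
This theorem is not proved in the paper at all: it is quoted from Bridson--Haefliger \cite[Ch.~I.7]{BH} (Bridson's theorem), so there is no in-paper argument to compare with. Your outline does follow the broad architecture of Bridson's actual proof (extract a positive separation constant from the finiteness of shapes, reduce near-minimizing strings to a bounded combinatorial type, then apply an Arzel\`a--Ascoli compactness argument), but the key technical step is wrong as stated. The uniform $\epsilon$ you posit does not exist: you only verify positivity of $d_P(v,F)$ for \emph{vertices} $v$ not on $F$, and then apply the bound to arbitrary points, but an arbitrary point of $P_\lambda$ can of course be at arbitrarily small positive distance from a face it does not lie on. What finiteness of shapes actually yields is a \emph{pointwise} constant $\varepsilon(x)>0$, namely the infimum over all cells $S$ containing $x$ of the distance from $x$ to the union of the faces of $S$ not containing $x$; this infimum is positive because, up to isometry, only finitely many configurations $(S,x)$ occur. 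That pointwise constant is enough for your first step (a string from $x$ of length less than $\varepsilon(x)$ stays in the cells containing $x$, whence $d_X(x,y)>0$ for $x\neq y$), but it is not uniform over $X$.

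The more serious gap is the claim that ``$\epsilon$ bounds the number of face transitions per unit length.'' This is false even with a genuine positive constant: a segment of a string can enter and leave a cell near one of its lower-dimensional faces while contributing arbitrarily little length (think of a geodesic in a $2$-complex passing very close to a vertex: it crosses several triangles, each crossing of arbitrarily small length). So there is no per-crossing length cost, and your reduction to a fixed combinatorial type $(\lambda_1,\dots,\lambda_m)$ does not follow. The correct statement --- that a \emph{taut} (non-backtracking, locally unshortenable) string of length at most $\ell$ has at most $N=N(\ell,\mathrm{Shapes}(X))$ segments --- is precisely the main theorem of \cite[Ch.~I.7]{BH} (Theorem I.7.28 there), and its proof is an induction on dimension using the geometry of links and a development argument; it cannot be replaced by the elementary counting you propose. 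Once that theorem is available, the rest of your sketch (Arzel\`a--Ascoli on a fixed compact union of cells, lower semicontinuity of length, recovery of the geodesic as a string, completeness) is essentially the standard argument.
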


A \emph{cubical complex} is a Euclidean polyhedral complex $X$
such that each face of $X$ is isometric to a unit cube
$I^n = [0, 1]^n \subset \E^n$.
Note that a cubical complex has finite shapes
if and only if it has finite dimension.
A face of $X$ isometric to $I^0$ is called a \emph{vertex} of $X$.
Since a vertex $v$ of $X$ is a one-point subspace of $X$,
we identify $v$ as an element of $X$.
A face of $X$ isometric to $I^1$ is called an \emph{edge} of $X$.
Two distinct vertex $v$ and $w$ of $X$ is said to be \emph{adjacent}
if there exists an edge of $X$ which contains both $v$ and $w$.
For a vertex $v$ of $X$,
the \emph{(cubical) link} $\lk(v; X)$ of $v$ in $X$
is defined to be the abstract simplicial complex whose faces are
the finite subsets $\sigma$ of vertices adjacent to $v$ such that
there exist a face of $X$ containing $v$ and $\sigma$.

Gromov showed the following:

\begin{theorem}[\cite{Gro}] \label{thm:NPC-flag}
  Let $X$ be a finite-dimensional cubical complex.
  Then $X$ has non-positive curvature
  if and only if $\lk(v; X)$ is a flag complex for any vertex $v$ of $X$.
\end{theorem}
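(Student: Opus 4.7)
The plan is to reduce to a local statement about links via the Cartan--Hadamard Theorem (Theorem~\ref{thm:CH}) and then to characterize when those links are CAT(1). Since $X$ is finite-dimensional it has finite shapes, so Theorem~\ref{thm:FS} makes $X$ a complete geodesic metric space; combined with Theorem~\ref{thm:CH}, the non-positive curvature condition is then genuinely a local property.

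First, I would equip each link $\lk(v; X)$ with an \emph{all-right} piecewise spherical metric: for each cube $Q$ of $X$ containing $v$, the set of unit tangent vectors at $v$ pointing into $Q$ is a spherical simplex in which every edge has length $\pi/2$, and these simplices glue along shared faces into a piecewise spherical complex whose underlying simplicial complex is exactly $\lk(v; X)$. A small closed ball around $v$ in $X$ is then isometric to the corresponding ball around the cone point in the Euclidean cone over $\lk(v; X)$. By a standard cone construction, this cone is locally CAT(0) at the apex if and only if $\lk(v; X)$ is CAT(1); away from vertices, small balls in $X$ are isometric to open subsets of Euclidean spaces or of products of such with iterated links, and these are CAT(0) once every vertex link is CAT(1). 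Hence $X$ has non-positive curvature if and only if every $\lk(v; X)$ is CAT(1).

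The main task is then to show that an all-right piecewise spherical complex $Y$ is CAT(1) if and only if its underlying simplicial complex is flag. The ``only if'' direction is direct: given vertices pairwise joined by edges but not spanning a simplex, one exhibits two points of $Y$ (for instance the midpoints of two edges of the missing simplex sharing a common vertex) whose intrinsic distance in $Y$ strictly exceeds their distance in the would-be filled simplex, violating the CAT(1) comparison inequality. For the ``if'' direction I would argue by induction on $\dim Y$. By Proposition~\ref{prp:flag}(3) every vertex link $\lk(v; Y)$ is again flag, and it is naturally an all-right piecewise spherical complex of strictly smaller dimension, hence CAT(1) by the inductive hypothesis; this gives local CAT(1) at every vertex, and local CAT(1) at non-vertex points is automatic. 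By a standard criterion, a locally CAT(1) piecewise spherical complex of finite shapes is globally CAT(1) if and only if it contains no closed geodesic of length less than $2\pi$.

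The main obstacle is ruling out such short closed geodesics. I would take a hypothetical closed geodesic $\gamma$ in $Y$ of length strictly less than $2\pi$, record the sequence of maximal simplices of $Y$ that it traverses, and examine how $\gamma$ crosses codimension-one faces. Since the flag hypothesis forces any pairwise-joined set of vertices of $Y$ to span a simplex, I would show that suitably chosen consecutive pieces of $\gamma$ always lie in a common simplex of $Y$, whereupon cutting across shortens $\gamma$ and contradicts minimality. Carrying out this shortening step cleanly, with precise control of how geodesics enter and leave all-right spherical simplices, is the delicate heart of the argument and by far the hardest part.
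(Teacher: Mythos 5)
This statement is quoted by the paper as Gromov's theorem and is not proved there at all: the author cites \cite{Gro} (the standard written account being Bridson--Haefliger \cite{BH}, II.5.18 and II.5.20), so there is no in-paper proof to compare against. Your outline is exactly the standard route from the literature: pass to the all-right piecewise spherical links, reduce non-positive curvature of $X$ to the condition that every vertex link is CAT(1) via the cone/link condition together with Theorem~\ref{thm:CH} and Theorem~\ref{thm:FS}, and then invoke ``all-right flag $\Leftrightarrow$ CAT(1)'' (Gromov's lemma). As a roadmap this is sound, and your treatment of the ``only if'' direction (an empty simplex forces either a closed geodesic of length $3\pi/2$ or, via midpoints of two edges of a minimal empty triangle, a violation of the spherical comparison inequality, with higher-dimensional empty simplices handled by passing to links) can be made rigorous with a little care about shortcuts through adjacent simplices.

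The genuine gap is that you have deferred precisely the mathematical content of the theorem. The ``if'' direction rests entirely on the claim that a flag all-right spherical complex contains no closed geodesic of length less than $2\pi$, and you explicitly leave this unproved, calling it ``the delicate heart of the argument.'' That heart is Gromov's lemma itself; without it your induction on dimension only yields local CAT(1), and the local-to-global criterion you invoke cannot be applied. The standard way to close this gap (as in \cite{BH}, II.5.18) is to show that in an all-right complex a local geodesic which enters the open star of a vertex $v$ must traverse an arc of length exactly $\pi$ inside the closed star, entering and leaving on the ``boundary sphere'' $\lk(v)$ at points realizing distance $\pi$; a closed geodesic of length $<2\pi$ then meets at most one open vertex star nontrivially in a controlled way, and the flag condition is used to produce a simplex containing consecutive arcs, allowing the shortening you describe. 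Until that (or an equivalent argument, e.g.\ Bowditch's or the disc-diagram proof in the cubical setting) is actually carried out, the proposal is a correct strategy but not a proof. A smaller point: your reduction at non-vertex points should be phrased via the spherical join decomposition of the link of a positive-dimensional face, $S^{k-1} * \lk(F;X)$, together with the fact that a spherical join is CAT(1) iff its factors are; as written, ``automatic'' hides exactly the same CAT(1) issue you are trying to establish.
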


We will discuss a translation and an extension of this characterization
by Gromov.
In order to do this, we now introduce a notion of cubical cone,
which behaves as a partial inverse of taking the cubical links.
Let $K$ be a finite-dimensional abstract simplicial complex $K$.
The \emph{cubical cone} $\CC(K)$ of $K$ is defined to be the cubical complex
such that
\begin{equation*}
  \CC(K) = \bigcup_{\sigma \in K} I^\sigma \subset \E^{(V(K))},
\end{equation*}
where
\begin{equation*}
  I^\sigma = \Bigl\{\, \sum_{v \in \sigma} t_v v \;\Bigm|\;
  t_v \in [0, 1] \,\Bigr\}
  \subset \E^{(\sigma)} \subset \E^{(V(K))}.
\end{equation*}
Here $\E^{(A)}$ denotes the direct sum $\bigoplus_{a \in A} \R a$
with the Euclidean metric with respect to $A$,
that is, $d_{\E^{(A)}}(\sum_a t_a a, \sum_a s_a a)
= \sqrt{\sum_a (t_a - s_a)^2}$.
Face maps of $\CC(K)$ are inclusions $I^\sigma \hookrightarrow \CC(K)$
for $\sigma \in K$.
Here we see $I^\sigma$ as a Euclidean polytope in $\E^{(\sigma)}$,
which is isometric to the $\# \sigma$-dimensional unit cube.

\begin{proposition}
  A vertex of $\CC(K)$ has form $\chi_\sigma$ for $\sigma \in F(K)$,
  where $\chi_\sigma$ denotes $\sum_{v \in \sigma} v$.
  Moreover, the link $\lk(\chi_\sigma; \CC(K))$
  is isomorphic to $\widetilde \sigma * \lk(\sigma; K)$.
\end{proposition}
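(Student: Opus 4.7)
The plan is to first identify the vertex set of $\CC(K)$, then describe adjacency, and finally analyze which subsets of neighbors of $\chi_\sigma$ fit into a common cube.

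First, I would observe that a vertex of $\CC(K)$ is a $0$-dimensional face, so it is a vertex of some cube $I^\tau$ with $\tau \in K$. Since the vertices of $I^\tau$ are exactly the points $\chi_\rho$ for $\rho \subseteq \tau$, and every subset of a face of $K$ is itself a face, the vertices of $\CC(K)$ are precisely the points $\chi_\sigma$ for $\sigma \in F(K)$. Next I would identify $V(\widetilde\sigma * \lk(\sigma; K)) = \sigma \sqcup V(\lk(\sigma; K))$ with the set of vertices of $\CC(K)$ adjacent to $\chi_\sigma$ via $v \mapsto \chi_{\sigma \setminus \{v\}}$ for $v \in \sigma$ and $v \mapsto \chi_{\sigma \cup \{v\}}$ for $v \in V(\lk(\sigma; K))$. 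Adjacency in a cubical complex means sharing an edge of some cube $I^\tau$, so $\chi_\sigma$ and $\chi_{\sigma'}$ are adjacent precisely when $\sigma \triangle \sigma' = \{v\}$ for a single $v$ and $\sigma \cup \sigma' \in K$; this matches exactly the two cases above.

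The main content is identifying faces. A face of $\CC(K)$ is a cube-face of some $I^\tau$, determined by a pair of disjoint subsets $A, B \subseteq \tau$ (coordinate $v$ is fixed to $0$ for $v \in A$, to $1$ for $v \in B$, and free on $\tau \setminus (A \cup B)$). Such a face contains $\chi_\sigma$ iff $B \subseteq \sigma \subseteq \tau$ and $A \cap \sigma = \emptyset$. Given finitely many neighbors of $\chi_\sigma$ corresponding to $R \subseteq \sigma$ (removals) and $N \subseteq V(\lk(\sigma; K))$ (additions), any cube-face of $I^\tau$ simultaneously containing $\chi_\sigma$ and all these neighbors must be free on $R \cup N$, i.e.\ $(A \cup B) \cap (R \cup N) = \emptyset$ and $R \cup N \subseteq \tau$. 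Since $R \subseteq \sigma$ is automatic, the existence of such $\tau \in K$ is equivalent to $\sigma \cup N \in K$, i.e.\ $N \in \lk(\sigma; K)$ (take $\tau = \sigma \cup N$, $A = \sigma \setminus R$, $B = \emptyset$). Thus the faces of $\lk(\chi_\sigma; \CC(K))$ are exactly the sets of the form $R \sqcup N$ with $R \subseteq \sigma$ and $N \in \lk(\sigma; K)$, which is precisely the face set of the join $\widetilde\sigma * \lk(\sigma; K)$.

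The step I expect to require the most care is the bookkeeping in the last paragraph, namely writing down the correct sub-cube of $I^\tau$ witnessing each candidate face of the link and checking that the minimal $\tau$ suffices. Once this is done, no condition is imposed on $R$ beyond $R \subseteq \sigma$ (matching the full simplex $\widetilde\sigma$), while the condition on $N$ reduces cleanly to being a face of $\lk(\sigma; K)$, yielding the asserted join decomposition.
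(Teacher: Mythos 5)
Your proof is correct and takes essentially the same route as the paper, which simply calls the vertex identification obvious, records the adjacency description, and invokes the ``obvious bijection''; you supply the cube-face bookkeeping that the paper omits. One small slip in that bookkeeping: in your explicit witness the roles of $A$ and $B$ are swapped (with $A = \sigma \setminus R$ fixed at $0$ the sub-cube would not contain $\chi_\sigma$); take instead $B = \sigma \setminus R$, $A = \emptyset$, or simply the whole cube $I^{\sigma \cup N}$, and the argument goes through.
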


\begin{proof}
  The first assertion is obvious.
  We now show the second.
  A vertex adjacent to $\chi_\sigma$ has form either
  $\chi_{\sigma \setminus \{ v \}}$ for $v \in \sigma$ or
  $\chi_{\sigma \cup \{ w \}}$ for $w \in V(\lk(\sigma; K))$.
  The obvious bijection
  \begin{equation*}
    V(\widetilde \sigma * \lk(\sigma; K))
    = V(\widetilde \sigma) \sqcup V(\lk(\sigma; K))
    \to V(\lk(\chi_\sigma; \CC(K)))
  \end{equation*}
  gives an isomorphism between abstract simplicial complexes.
\end{proof}

\begin{proposition} \label{prp:CC}
  The cubical cone $\CC(K)$ is a CAT(0) space if and only if
  $K$ is a flag complex.
\end{proposition}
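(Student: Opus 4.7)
The plan is to combine Gromov's characterization (Theorem~\ref{thm:NPC-flag}) with the Cartan--Hadamard Theorem (Theorem~\ref{thm:CH}). Since $\CC(K)$ is a cubical complex, one needs three ingredients: completeness, simple connectivity, and the local flagness condition on vertex links.

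First I would verify that $\CC(K)$ is a complete geodesic space. Because $K$ is finite-dimensional, the cubes $I^\sigma$ appearing as faces have uniformly bounded dimension, so $\CC(K)$ has finite shapes. Straight-line homotopy toward $0 \in \E^{(V(K))}$ makes sense globally because each $I^\sigma$ is star-shaped with respect to the origin, so $\CC(K)$ is contractible; in particular it is path-connected and simply connected. By Theorem~\ref{thm:FS}, $\CC(K)$ is therefore a complete geodesic metric space, and by Theorem~\ref{thm:CH}, being CAT(0) is equivalent to having non-positive curvature.

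Next I would use Theorem~\ref{thm:NPC-flag}, which says $\CC(K)$ has non-positive curvature if and only if every vertex link $\lk(\chi_\sigma; \CC(K))$ is a flag complex. By the preceding proposition, this link is isomorphic to $\widetilde\sigma * \lk(\sigma; K)$. Applying Proposition~\ref{prp:flag}(4), together with the fact that every simplex is a flag complex by Proposition~\ref{prp:flag}(2), this is equivalent to $\lk(\sigma; K)$ being a flag complex for every $\sigma \in F(K)$.

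For the forward implication, if $K$ is a flag complex then Proposition~\ref{prp:flag}(3) ensures $\lk(\sigma; K)$ is flag for every face $\sigma$, so every vertex link of $\CC(K)$ is flag, giving non-positive curvature and hence CAT(0). For the converse, assuming $\CC(K)$ is CAT(0), the link at the single vertex $\chi_\emptyset = 0$ is flag; but $\widetilde\emptyset * \lk(\emptyset; K) = K$ itself, so $K$ is a flag complex. No step looks like a serious obstacle here: the only mild subtlety is recognising that the Cartan--Hadamard hypotheses (completeness and simple connectivity) come essentially for free from finite-dimensionality of $K$ and star-shapedness of each $I^\sigma$ at the origin.
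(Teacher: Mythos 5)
Your proposal is correct and follows exactly the paper's own argument: Theorem~\ref{thm:FS} for completeness, star-shapedness at the origin for simple connectivity, the Cartan--Hadamard Theorem to reduce CAT(0) to non-positive curvature, Gromov's criterion on vertex links, and the identification of links via the preceding proposition together with Proposition~\ref{prp:flag}. The extra detail you give (spelling out both directions and noting that the link at $\chi_\emptyset$ recovers $K$ itself) is a useful elaboration of the same proof.
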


\begin{proof}
  By Theorem~\ref{thm:FS}, $\CC(K)$ is a complete metric space.
  Since $\CC(K)$ is star-shaped at the origin, $\CC(K)$ is contractible,
  and thus simply connected.
  By Theorem~\ref{thm:CH},
  $\CC(K)$ is CAT(0) if and only if it has non-positive curvature.
  By Theorem~\ref{thm:NPC-flag}, this is equivalent to
  that the each vertex link of $\CC(K)$ is a flag complex.
  Combining the previous proposition and Proposition~\ref{prp:flag},
  we have the assertion.
\end{proof}

\begin{proposition}
  The inclusion $i \colon \CC(K) \to \E^{(V(K))}$ is a non-expanding map.
  Moreover, for $\xi, \eta \in \CC(K)$, if the equation
  \begin{equation*}
    d_{\CC(K)}(\xi, \eta) = d_{\E^{(V(K))}}(\xi, \eta)
  \end{equation*}
  holds, then the line segment between $\xi$ and $\eta$ in $E^{(V(K))}$
  is contained in $\CC(K)$.
\end{proposition}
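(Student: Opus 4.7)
The first assertion is immediate from the universal characterization of the intrinsic pseudo-metric recalled in Section~\ref{sec:met}: to verify that $i \colon \CC(K) \to \E^{(V(K))}$ is non-expanding it suffices to check that each composition $i \circ i_\sigma \colon I^\sigma \to \E^{(V(K))}$ is non-expanding, and this composition is just the canonical inclusion $I^\sigma \subset \E^{(\sigma)} \subset \E^{(V(K))}$, which is in fact distance-preserving.

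For the second assertion, the plan is to invoke Theorem~\ref{thm:FS} to replace the infimum defining $d_{\CC(K)}(\xi, \eta)$ by an actual geodesic path. The complex $\CC(K)$ is star-shaped at the origin and hence connected; since $K$ is finite-dimensional, the family $\{\, I^\sigma \;|\; \sigma \in K \,\}$ has only finitely many isometry types, so $\CC(K)$ has finite shapes. Theorem~\ref{thm:FS} then produces a geodesic from $\xi$ to $\eta$ obtained from a string $\Sigma = \{ (\sigma_j, x_j, y_j) \}_{j = 1}^m$ of length $\ell(\Sigma) = d_{\CC(K)}(\xi, \eta)$. Viewing each cube $I^{\sigma_j}$ through its isometric inclusion into $\E^{(V(K))}$, the gluing conditions of the string become $x_1 = i(\xi)$, $y_j = x_{j + 1}$ for $j < m$, and $y_m = i(\eta)$, and $\ell(\Sigma) = \sum_{j = 1}^m \lvert x_j - y_j \rvert$ where $\lvert \cdot \rvert$ denotes the Euclidean norm on $\E^{(V(K))}$.

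The hypothesis $d_{\CC(K)}(\xi, \eta) = d_{\E^{(V(K))}}(\xi, \eta)$ now reads $\sum_j \lvert x_j - y_j \rvert = \lvert x_1 - y_m \rvert$, which is the equality case of the iterated triangle inequality in Euclidean space. This forces the points $x_1, y_1, y_2, \dots, y_{m - 1}, y_m$ to be collinear and to occur in this order along the segment $[x_1, y_m]$, so the segments $[x_j, y_j]$ concatenate to the full line segment from $\xi$ to $\eta$ in $\E^{(V(K))}$. Since each $[x_j, y_j]$ lies in the convex set $I^{\sigma_j} \subset \CC(K)$, the whole line segment lies in $\CC(K)$. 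The delicate step is the application of Theorem~\ref{thm:FS}, which is what converts the combinatorial infimum defining $d_{\CC(K)}$ into a geodesic realized by a string; once that is available, the argument reduces to the familiar rigidity of equality in the Euclidean triangle inequality.
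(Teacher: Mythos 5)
Your proof is correct and follows essentially the same route as the paper: both parts rest on the characterization of the intrinsic pseudo-metric for the non-expansion claim, and on Theorem~\ref{thm:FS} to produce an actual geodesic from $\xi$ to $\eta$ whose image is then forced onto the Euclidean segment by the rigidity of the triangle inequality. The only cosmetic difference is that you unpack the geodesic into its underlying string and apply the equality case of the iterated triangle inequality segment by segment, whereas the paper observes more compactly that $i \circ \gamma$ is itself a geodesic path in $\E^{(V(K))}$ and hence parametrizes the line segment.
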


\begin{proof}
  The first assertion follows from that
  the composition $I^\sigma \hookrightarrow \CC(K) \hookrightarrow \E^{(V(K))}$
  is distance preserving for $\sigma \in F(K)$.
  We now show the second.
  Let $\xi, \eta \in \CC(K)$ such that
  $d_{\CC(K)}(\xi, \eta) = d_{\E^{(V(K))}}(\xi, \eta)$.
  Take a geodesic path
  $\gamma \colon [0, \ell] \to \CC(K)$ from $\xi$ to $\eta$ in $\CC(K)$.
  Then $i \circ \gamma$ is a geodesic path from $\xi$ to $\eta$
  in the Euclidean space $\E^{(V(K))}$, which implies the assertion.
\end{proof}

\section{Orthoscheme complex} \label{sec:ortho}

In this section, we consider the orthoscheme complex of a poset,
which is the order complex equipped with
a certain Euclidean polyhedral complex structure.

For positive real numbers $\ell_1, \dots, \ell_d$, the \emph{orthoscheme}
$O(\ell_1, \dots, \ell_d)$ is defined to be the Euclidean polytope in $\E^d$
spanned by $v_i = \sum_{j = 1}^i \ell_j e_j$ for $i = 0, \dots, d$.
Here $e_1, \dots, e_d$ denote the standard orthonormal basis of $\E^d$.
Then the orthoscheme  $O(\ell_1, \dots, \ell_d)$ is a $d$-dimensional
Euclidean simplex satisfying the following properties:
\begin{itemize}
  \item The edge $v_i v_j$ is orthogonal to $v_j v_k$
    for $0 \le i \le j \le k \le d$.
  \item The edge $v_{i - 1} v_i$ has length $\ell_i$
    for $1 \le i \le d$.
  \item The edge $v_i v_j$ has length $\sqrt{\sum_{k = i + 1}^j \ell_k^2}$
    for $0 \le i \le j \le d$.
\end{itemize}
Let us note that the unit $d$-orthoscheme $O(1, \dots, 1)$ is isometric to
the facet of the barycentric subdivision of the cube $[-1, 1]^d$.

Let $P$ be a poset, and $h \colon P \to \R$ be a strictly order-preserving map.
We now construct a Euclidean polyhedral complex structure
on the order complex $\GR P$ by using $h$.
For a finite chain $\sigma = \{ x_0 < \dots < x_d \}$ of $P$,
Let us define
\begin{equation*}
  O^\sigma = O(\sqrt{h(x_1) - h(x_0)}, \sqrt{h(x_2) - h(x_1)},
  \dots, \sqrt{h(x_d) - h(x_{d - 1})}),
\end{equation*}
and define $i_\sigma \colon O_\sigma \to \GR P$ to be the affine map
which sends $v_i$ to $x_i$ for $i = 0, \dots, d$.
Then $i_\sigma$ is an injection onto $\Delta^\sigma$.
We can see that $i_\sigma$ gives a Euclidean metric on $\Delta^\sigma$
such that $d_{\Delta^\sigma}(x_i, x_j) = \sqrt{h(x_j) - h(x_i)}$
for $0 \le i \le j \le d$.
The orthoscheme complex of $P$ with respect to $h$
is defined to be the Euclidean polyhedral complex
on the geometric realization $\GR P$
whose face maps are $i_\sigma \colon O^\sigma \to \GR P$
for $\sigma \in \OC P$.
We say that a poset $P$ is \emph{connected} if for any $x, y \in P$
there exists a finite sequence $x_0, \dots, x_{2 n}$ in $P$ such that
\begin{equation*}
  x = x_0 \le x_1 \ge x_2 \le \dots \ge x_{2 n} = y.
\end{equation*}
Let us not that $P$ is connected if and only if
the orthoscheme complex $\GR P$ is connected.
By using Theorem~\ref{thm:FS} we have the following.

\begin{proposition}
  If $P$ is connected and the image of $h \colon P \to \R$ is finite,
  then the orthoscheme complex $\GR P$ of $P$ with respect to $h$ is
  a complete geodesic metric space.
\end{proposition}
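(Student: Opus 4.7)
The plan is to verify the hypotheses of Theorem~\ref{thm:FS} and then invoke it directly. That is, I will check that the orthoscheme complex $\GR P$, as defined immediately before the statement, is in fact a Euclidean polyhedral complex, that it is connected, and that it has finite shapes; then Bridson's theorem delivers completeness, the fact that the intrinsic pseudo-metric is a metric, and the existence of geodesics.

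First I would confirm that the data $\{(O^\sigma, i_\sigma)\}_{\sigma \in \OC P}$ really defines a Euclidean polyhedral complex in the sense of Section~\ref{sec:met}. The images $i_\sigma(O^\sigma) = \Delta^\sigma$ obviously cover $\GR P$. If $\sigma, \tau$ are finite chains of $P$ with $\Delta^\sigma \cap \Delta^\tau \neq \emptyset$, then by the standard properties of geometric realizations this intersection equals $\Delta^{\sigma \cap \tau}$, and $\sigma \cap \tau$ is itself a chain, hence a face of both $\sigma$ and $\tau$. The inverse image under $i_\sigma$ is the corresponding face of the orthoscheme $O^\sigma$, and similarly for $\tau$. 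The induced gluing map is an isometry because in both orthoschemes the distance between the vertices $x_i, x_j$ of the common subchain is $\sqrt{h(x_j) - h(x_i)}$, and an affine bijection between simplices that agrees on vertices and preserves edge lengths is an isometry between the induced Euclidean metrics.

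Next I would check connectedness. Given $\xi, \eta \in \GR P$, pick chains $\sigma, \tau$ with $\xi \in \Delta^\sigma$ and $\eta \in \Delta^\tau$, and pick $x \in \sigma$, $y \in \tau$. By hypothesis $P$ is connected, so there is a zigzag $x = x_0 \le x_1 \ge x_2 \le \dotsb \ge x_{2n} = y$; each consecutive pair $\{x_i, x_{i+1}\}$ is a chain, so the segment $\Delta^{\{x_i, x_{i+1}\}}$ sits inside $\GR P$ and provides a string between the corresponding vertices. Concatenating these with segments inside $\Delta^\sigma$ and $\Delta^\tau$ yields a string from $\xi$ to $\eta$, so $\GR P$ is connected.

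For finite shapes, the isometry type of $O^\sigma = O(\sqrt{h(x_1) - h(x_0)}, \dotsc, \sqrt{h(x_d) - h(x_{d-1})})$ is determined by the tuple of edge lengths. Because $h$ is strictly order-preserving on the chain $\sigma$, the values $h(x_0) < \dotsb < h(x_d)$ are distinct elements of the finite set $\operatorname{Im} h$, so $d + 1 \le \#\operatorname{Im} h$ and each difference $h(x_i) - h(x_{i-1})$ takes one of finitely many values. Therefore only finitely many isometry types of orthoschemes occur among $\{O^\sigma\}$, and $\GR P$ has finite shapes. Applying Theorem~\ref{thm:FS} completes the proof. No step is really the main obstacle; the only mild subtlety is ensuring that the gluing map along a common subchain is a genuine isometry, but this is immediate from the explicit edge-length formula $\sqrt{h(x_j) - h(x_i)}$ in each orthoscheme.
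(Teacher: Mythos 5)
Your proposal is correct and follows exactly the route the paper intends: the paper simply invokes Theorem~\ref{thm:FS} for this proposition, and your argument spells out the verification of its hypotheses (polyhedral complex structure with isometric gluings, connectedness from connectedness of $P$, and finite shapes from the finiteness of the image of $h$). The details you supply, including the finitely many possible edge-length tuples and the vertex-distance argument for the gluing isometries, are accurate.
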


In the rest of this paper,
we treat only posets of finite height,
and discuss their orthoscheme complexes
with respect to the canonical height function
\begin{equation*}
  \height_P \colon P \to \{ 0, 1, \dots, \height P \} \subset \R.
\end{equation*}

\begin{lemma} \label{lem:FK-CCK}
  Let $K$ be a finite-dimensional abstract simplicial complex.
  Then the orthoscheme complex $\GR{F(K)}$
  is isometric to the cubical cone $\CC(K)$.
\end{lemma}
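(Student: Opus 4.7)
The plan is to build an explicit bijection $\Phi \colon \GR{F(K)} \to \CC(K)$ that restricts to an isometry on each orthoscheme of $\GR{F(K)}$, realising each such orthoscheme as one of the pieces in the canonical barycentric subdivision of a cube $I^\sigma$ into $\# \sigma\,!$ orthoschemes. For a chain $\tau = \{ \sigma_0 \subsetneq \dotsb \subsetneq \sigma_d \}$ in $F(K)$, define $\Phi$ on $\Delta^\tau \subset \GR{F(K)}$ to be the affine extension of $\sigma_i \mapsto \chi_{\sigma_i}$. These definitions agree on intersections of simplices, so $\Phi$ is well-defined, and since $\chi_{\sigma_i} \in I^{\sigma_d} \subset \CC(K)$ the image lies in $\CC(K)$.

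The first key step is to verify that $\Phi|_{\Delta^\tau}$ is an isometry onto its image. The displacement vectors
\begin{equation*}
  \chi_{\sigma_i} - \chi_{\sigma_{i - 1}}
  = \sum_{v \in \sigma_i \setminus \sigma_{i - 1}} v,
  \qquad i = 1, \dotsc, d,
\end{equation*}
have pairwise disjoint supports, hence are mutually orthogonal in $\E^{(V(K))}$, and the $i$-th one has Euclidean length $\sqrt{\# (\sigma_i \setminus \sigma_{i - 1})} = \sqrt{\height_{F(K)}(\sigma_i) - \height_{F(K)}(\sigma_{i - 1})}$. These are exactly the edge lengths and orthogonality relations defining the abstract orthoscheme $O^\tau$, so the composite $\Phi \circ i_\tau \colon O^\tau \to \E^{(V(K))}$ is an affine isometric embedding whose image is the Euclidean simplex spanned by $\chi_{\sigma_0}, \dotsc, \chi_{\sigma_d}$.

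The remaining task is to show that $\Phi$ is bijective, from which the global isometry follows. For surjectivity, take $x = \sum_{v \in \sigma} t_v v$ in a cube $I^\sigma$, reorder so that $t_{v_1} \ge \dotsb \ge t_{v_{\# \sigma}} \ge 0$, set $\sigma_i = \{ v_1, \dotsc, v_i \}$, and let $s_0 = 1 - t_{v_1}$, $s_i = t_{v_i} - t_{v_{i + 1}}$ for $1 \le i < \# \sigma$, and $s_{\# \sigma} = t_{v_{\# \sigma}}$; a summation-by-parts computation then gives $\sum_i s_i \chi_{\sigma_i} = x$, so $x = \Phi( \sum_i s_i \sigma_i )$. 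Injectivity follows from the observation that this decomposition is unique on the interior of each image orthoscheme, while ties in the values $t_v$ correspond precisely to the shared faces of adjacent orthoschemes; consequently the images $\Phi(\Delta^\tau)$ partition each cube $I^\sigma$ into orthoschemes indexed by the maximal chains of $F(\widetilde \sigma)$. Thus $\Phi$ is a cellular bijection, isometric on each cell with respect to this orthoschemal refinement of the cubical structure; since any string through a cube can be broken at the boundaries of the refining orthoschemes without changing its total length, the intrinsic metric on $\CC(K)$ is unchanged by passing to the refinement, and $\Phi$ becomes a global isometry. The main obstacle is the combinatorial uniqueness argument in the sort-and-decompose procedure, essentially the standard fact that the barycentric subdivision of a cube is indexed by the maximal chains in the face poset of its vertex set.
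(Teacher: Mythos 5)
Your proof is correct and takes essentially the same route as the paper's: the same affine map sending the vertex $\sigma_i$ to $\chi_{\sigma_i}$, the same sort-the-coordinates construction of the inverse, and the same observation that strings can be decomposed across the orthoschemal subdivision of each cube without changing length, so the intrinsic metrics agree. The only difference is that you write out the orthogonality and edge-length computation that the paper dismisses with ``by definition we can check.''
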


\begin{proof}
  Define $\phi \colon \GR{F(K)} \to \CC(K)$ by
  \begin{equation*}
    \phi \Bigl( \sum_{i = 0}^d t_i \sigma_i \Bigr)
    = \sum_{i = 0}^d t_i \chi_{\sigma_i}.
  \end{equation*}
  To show that $\phi$ is a bijection, we now construct the inverse $\psi$
  of $\phi$.
  Let $\xi = \sum_{v \in V(K)} t_v v$ be an element of $\CC(K)$.
  Note that $\{\, v \in V(K) \;|\; t_v > 0 \,\}$ is finite and
  forms a face of $K$.
  Take a descending sequence $1 = s_0 > s_1 > \dots > s_{d + 1} = 0$ such that
  \begin{equation*}
    \{\, t_v \;|\; v \in V(K) \,\} \cup \{ 0, 1 \}
    = \{ s_0, s_1, \dots, s_{d + 1} \},
  \end{equation*}
  and let
  \begin{equation*}
    \sigma_i = \{\, v \in V(K) \;|\; t_v \ge s_i \}
  \end{equation*}
  for $i = 0, \dots, d$.
  Then $\sigma_0 \subsetneq \sigma_1 \subsetneq \dots \subsetneq \sigma_d$
  is a finite chain of $F(K)$.
  We define
  \begin{equation*}
    \psi(\xi) = \sum_{i = 0}^d (s_i - s_{i + 1}) \sigma_i.
  \end{equation*}
  We can easily check that $\psi$ is the inverse of $\phi$.

  We next show that $\phi$ is distance preserving.
  By definition we can check that the restriction of $\phi$ on $\Delta^\Sigma$
  is distance preserving for any finite chain $\Sigma$ of $F(K)$.
  Using the characterization of intrinsic metric to $\GR{F(K)}$,
  it follows that $\phi$ is non-expanding.
  Moreover, a string in $\CC(K)$ can be decomposed via $\psi$ into a string
  in $F(K)$ of the same length,
  which implies that $\phi$ is distance preserving.
\end{proof}

The following gives a translation of Theorem~\ref{thm:NPC-flag}
for orthoscheme complex.

\begin{theorem} \label{thm:LBS-CAT0}
  Let $S$ be a locally Boolean semilattice of finite height.
  Then the orthoscheme complex $\GR S$ is a CAT(0) space
  if and only if $S$ is a flag semilattice.
\end{theorem}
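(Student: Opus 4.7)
The plan is to chain together several equivalences already established in the excerpt, so the proof becomes essentially a bookkeeping exercise. The key observation is that each of the three notions in play---locally Boolean semilattice, cubical cone, and flag condition---has a clean combinatorial avatar via a single simplicial complex $K$.

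First, I would apply Corollary~\ref{cor:LBS} to obtain an abstract simplicial complex $K$ with $F(K) \cong S$ as posets. Since $S$ has finite height and the height in $F(K)$ equals the cardinality of a face, the finite height assumption translates to $K$ being finite-dimensional. This is the crucial step that lets us apply the subsequent results that required finite-dimensionality.

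Next, I would invoke Lemma~\ref{lem:FK-CCK} to identify the orthoscheme complex $\GR{S} \cong \GR{F(K)}$ isometrically with the cubical cone $\CC(K)$. Then Proposition~\ref{prp:CC}, which already packaged Gromov's characterization together with the Cartan-Hadamard theorem for the star-shaped space $\CC(K)$, yields that $\CC(K)$ is CAT(0) if and only if $K$ is a flag complex. Finally, the proposition at the end of Section~\ref{sec:rep} gives the equivalence between $K$ being a flag complex and $F(K) \cong S$ being a flag semilattice. Concatenating these four equivalences produces the statement.

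There is no substantive obstacle to this plan: every ingredient is in place, and the only thing to verify carefully is that the finite-height hypothesis on $S$ correctly yields the finite-dimensionality of $K$ needed by Proposition~\ref{prp:CC} (which in turn depends on Theorem~\ref{thm:NPC-flag} for finite-dimensional cubical complexes) and the finite-shapes hypothesis of Theorem~\ref{thm:FS}. If anything deserves emphasis in writing, it is the direction of the biconditional: the ``only if'' direction is not separately argued but rides on the same isomorphism $S \cong F(K)$, since a non-flag $S$ forces a non-flag $K$ and hence a non-CAT(0) cubical cone, hence a non-CAT(0) orthoscheme complex.
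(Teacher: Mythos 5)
Your proposal is correct and follows essentially the same route as the paper: reduce to $S = F(K)$ via Corollary~\ref{cor:LBS}, identify $\GR{F(K)}$ with $\CC(K)$ by Lemma~\ref{lem:FK-CCK}, apply Proposition~\ref{prp:CC}, and use the (trivial) equivalence between $K$ being a flag complex and $F(K)$ being a flag semilattice. Your explicit remark that finite height of $S$ yields finite-dimensionality of $K$ is a point the paper passes over silently, but it is the same argument.
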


\begin{proof}
  By Corollary~\ref{cor:LBS}, we can assume $S = F(K)$
  for some finite-dimensional abstract simplicial complex $K$.
  Note that $F(K)$ is a flag semilattice if and only if $K$ is a flag complex.
  The assertion follows from Proposition~\ref{prp:CC} and the previous lemma.
\end{proof}

The following is an extension of the previous theorem.

\begin{theorem} \label{thm:LDS-CAT0}
  Let $S$ be a locally distributive semilattice of finite height.
  Then the orthoscheme complex $\GR S$ is a CAT(0) space
  if and only if $S$ is a flag semilattice.
\end{theorem}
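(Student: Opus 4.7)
By Theorem~\ref{thm:LDS} we may identify $S$ with $DF(K)$ for some finite-dimensional abstract simplicial complex $K$ equipped with a compatible order. By the last proposition of Section~\ref{sec:rep}, $S$ is a flag semilattice if and only if $K$ is a flag complex. The strategy is to reduce to the locally Boolean case (Theorem~\ref{thm:LBS-CAT0}, which gives $\GR{F(K)}$ CAT(0) iff $K$ is flag) by realizing $\GR{DF(K)}$ as a convex subcomplex of $\GR{F(K)}$.

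The inclusion of posets $DF(K) \hookrightarrow F(K)$ is strictly order-preserving and respects the canonical height function, so it extends simplex-by-simplex to an isometric embedding of orthoscheme complexes $\GR{DF(K)} \hookrightarrow \GR{F(K)}$. Composing with the isometry of Lemma~\ref{lem:FK-CCK} transports the image into
\begin{equation*}
  A = \Bigl\{\, \xi = \sum_{v \in V(K)} t_v v \in \CC(K) \;\Bigm|\;
  t_u \ge t_v \text{ whenever } u \le v \,\Bigr\},
\end{equation*}
since a point of $\CC(K)$ arises from $\GR{DF(K)}$ precisely when each level set $\{v : t_v \ge s\}$ is a down face. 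The crux is to prove that $A$ is geodesically convex in $\CC(K)$, using the compatible-order condition to show that geodesic paths between two points of $A$ remain in $A$ and that the induced metric on $A$ coincides with the intrinsic metric of $\GR{DF(K)}$; the final proposition of Section~\ref{sec:met} is available here to pass between the intrinsic and the ambient Euclidean geometry whenever these agree.

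The forward direction then follows at once: if $S$ is flag then so is $K$, so $\CC(K)$ is CAT(0) by Proposition~\ref{prp:CC}, and the closed convex subset $A$ — isometric to $\GR S$ — inherits CAT(0). For the reverse direction I would argue contrapositively. If $S$ is not flag, choose a pairwise bounded but jointly unbounded triple $x_1, x_2, x_3 \in S$, so that the three pairwise joins $y_{ij} = x_i \vee x_j$ exist in $S$ but the $y_{ij}$ are pairwise unbounded. These six elements together with $x_1 \wedge x_2 \wedge x_3$ generate a subcomplex of $\GR S$ from which, by examining the spherical link of a suitably chosen vertex, one extracts a closed local geodesic of total angular length $<2\pi$ that bounds no disk in the link. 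The standard link criterion for non-positive curvature then rules out CAT(0).

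The main obstacle I expect is the geodesic convexity of $A$ in $\CC(K)$. The subtlety is that $\CC(K)$ may itself fail to be CAT(0), so one cannot simply invoke the projection onto a closed convex subset of a CAT(0) space; one has to analyze strings in $\CC(K)$ directly and show that any string between two points of $A$ can be replaced by one of the same or lesser length that lies entirely in $A$, with the compatible-order condition ensuring that the monotonicity inequalities $t_u \ge t_v$ are preserved under this rearrangement. The reverse-direction link calculation is the other technical step, but it is local and parallels the Gromov-type obstruction already visible in the cubical case.
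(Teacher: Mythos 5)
Your overall architecture (reduce to $S = DF(K)$ via Theorem~\ref{thm:LDS}, identify $\GR{DF(K)}$ with the monotone region $A \subset \CC(K)$, prove $A$ is convex, and deduce the forward direction from Proposition~\ref{prp:CC}) matches the paper, and you correctly diagnose that the convexity of $A$ is the crux and that one cannot invoke nearest-point projection onto a convex subset because $\CC(K)$ need not be CAT(0). But you stop at naming the obstacle: the argument that ``any string between two points of $A$ can be replaced by one of the same or lesser length lying in $A$'' is exactly what has to be proved, and it is not routine. The paper's proof writes $A = \bigcap_{v < w} Y_{vw}$ with $Y_{vw} = \CC(K) \cap \{ t_v \ge t_w \}$, and for each pair uses the explicit Euclidean nearest-point projection onto the half-space $\{ t_v \ge t_w \}$; the decisive point, which your sketch omits, is that this projection maps $\CC(K)$ into itself \emph{because the order is compatible}: if $w \in \sigma \in K$ and $v < w$, then $\sigma \cup \{v\} \subset \overline{\sigma}$ is again a face, so transferring mass from the $w$-coordinate to the $v$-coordinate stays inside $\CC(K)$. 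One then shortens a hypothetical geodesic leaving $Y_{vw}$ strictly, a contradiction. Without this construction your forward direction is incomplete.

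The reverse direction is where you genuinely diverge, and where the gap is most serious. You propose to find a closed local geodesic of length $< 2\pi$ in the spherical link of a vertex. For the cubical complex $\CC(K)$ the link is an all-right spherical complex and Gromov's criterion is combinatorial, but $\GR{DF(K)}$ is an orthoscheme complex whose vertex links are \emph{not} all-right; deciding whether such a link is CAT(1), or exhibiting a short closed local geodesic in it, requires computing dihedral angles of orthoschemes and verifying that a candidate path is locally geodesic --- precisely the kind of computation that makes the Brady--McCammond problem for $NPC_n$ hard. You neither identify the vertex nor carry out the angle computation, and it is not clear that the path through the six elements you list is a local geodesic. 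The paper avoids all of this with a short global argument: for pairwise bounded $\sigma_1, \sigma_2, \sigma_3$, the Euclidean segments $[\chi_{\sigma_i}, \chi_{\sigma_j}]$ lie in $X = \phi(\GR{DF(K)})$, so the pairwise distances in $X$ equal the ambient Euclidean distances; the CAT(0) comparison inequality applied to the midpoint of $[\chi_{\sigma_1}, \chi_{\sigma_2}]$ and the vertex $\chi_{\sigma_3}$ then forces $d_X \le d_{\E^{(V(K))}}$ for that pair, which (since the inclusion $\CC(K) \hookrightarrow \E^{(V(K))}$ is non-expanding) forces the Euclidean segment into $\CC(K)$ and hence $\sigma_1 \cup \sigma_2 \cup \sigma_3 \in F(K)$, whence it is a down face. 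You should replace your link computation with this midpoint argument, which uses only the convexity already established and the elementary proposition at the end of Section~\ref{sec:met}.
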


To show this theorem, we first show the following.

\begin{lemma}
  Let $K$ be a finite-dimensional abstract simplicial complex,
  and fix a compatible order on $K$.
  Then $\GR{DF(K)}$ is a convex subset of $\GR{F(K)}$,
  that is, any geodesic path in $\GR{F(K)}$ between points
  of $\GR{DF(K)}$ is contained in $\GR{DF(K)}$.
  In particular, the induced metric on $\GR{DF(K)}$ from $\GR{F(K)}$
  coincides with the intrinsic metric of its own.
\end{lemma}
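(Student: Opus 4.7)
The plan is to pass to the cubical cone via Lemma~\ref{lem:FK-CCK}, identifying $\GR{F(K)}$ with $\CC(K) \subset \E^{(V(K))}$. The natural inclusion $DF(K) \hookrightarrow F(K)$ then realizes $\GR{DF(K)}$ as the subset $\CC(K) \cap H$, where $H$ is the closed convex polyhedron
\[
  H = \Bigl\{\, \xi = \sum_{v \in V(K)} t_v v \;\Bigm|\; t_w \ge t_v \text{ whenever } w \le v \,\Bigr\}.
\]
Indeed, a vertex $\chi_\sigma$ lies in $H$ exactly when $\sigma$ is a down set, and $H$ is closed under convex combinations; conversely, for any $\xi \in \CC(K) \cap H$, the level sets of $\xi$ are subsets of $\supp(\xi) \in K$ and are down sets, hence down faces, yielding a chain in $DF(K)$ whose orthoscheme contains $\xi$.

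The key tool is the Euclidean closest-point projection $\pi_H \colon \E^{(V(K))} \to H$. I first would show $\pi_H(\CC(K)) \subseteq \CC(K)$. Given $\xi \in \CC(K)$ with $\sigma = \supp(\xi)$, write $y = \pi_H \xi$; note $y \ge 0$, since zeroing any negative entry would keep the point in $H$ and strictly reduce the distance to $\xi \ge 0$. Let $W = V(K) \setminus \overline{\sigma}$, which is up-closed with $\xi|_W = 0$. The truncated vector $\tilde y$ agreeing with $y$ on $\overline{\sigma}$ and vanishing on $W$ still lies in $H$ (the only mixed case $w \in \overline{\sigma}, v \in W$ needs $y_w \ge 0$), and
\[
  \| \tilde y - \xi \|^2 = \| y - \xi \|^2 - \sum_{v \in W} y_v^2,
\]
so optimality of $y$ forces $y|_W = 0$, hence $\supp(y) \subseteq \overline{\sigma}$, which is a face of $K$ by compatibility. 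Next, since each cell $I^\sigma \hookrightarrow \CC(K)$ embeds isometrically into $\E^{(V(K))}$, the $\CC(K)$-length of any piecewise-affine path coincides with its Euclidean length in $\E^{(V(K))}$; because $\pi_H$ is piecewise affine and Euclidean $1$-Lipschitz, this upgrades $\pi_H|_{\CC(K)}$ to a $1$-Lipschitz retraction of $(\CC(K), d_{\CC(K)})$ onto $(\CC(K) \cap H, d_{\CC(K)})$.

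Finally, let $\gamma \colon [0, \ell] \to \CC(K)$ be a geodesic whose endpoints lie in $\CC(K) \cap H$. Then $\pi_H \circ \gamma$ is a path in $\CC(K) \cap H$ of $d_{\CC(K)}$-length at most $\ell$; since $\ell$ already equals $d_{\CC(K)}(\gamma(0), \gamma(\ell))$, both paths have $d_{\CC(K)}$-length, and hence Euclidean length, exactly $\ell$. On each region of the normal fan of $H$, $\pi_H$ is affine with derivative the orthogonal projection onto the tangent space of the active face, so $|(\pi_H \circ \gamma)'(t)| \le |\gamma'(t)|$ pointwise, with equality almost everywhere since the integrals agree. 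By Pythagoras, $|Pv| = |v|$ for an orthogonal projection $P$ forces $v = Pv$, so $\gamma'(t) = (\pi_H \circ \gamma)'(t)$ a.e.; integrating, $\gamma - \pi_H \circ \gamma$ is constant, and since it vanishes at $t = 0$, we have $\gamma \equiv \pi_H \circ \gamma$ and $\gamma \subseteq H$. The in-particular clause is then immediate: completeness of $\GR{F(K)}$ provides a geodesic realizing the induced distance between any two points of $\GR{DF(K)}$, and by convexity this geodesic realizes the intrinsic distance as well. The main obstacle is establishing $\pi_H(\CC(K)) \subseteq \CC(K)$, which is where compatibility of the order is essential: without it the down-closure $\overline{\sigma}$ of a face need not be a face, and the projection can produce supports lying outside $\CC(K)$.
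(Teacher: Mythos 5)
Your proposal is correct in substance and shares the paper's basic strategy: transport everything to the cubical cone $\CC(K)$ via Lemma~\ref{lem:FK-CCK}, identify the image of $\GR{DF(K)}$ with $\CC(K)$ cut out by the inequalities $t_w \ge t_v$ for $w \le v$, and use a Euclidean nearest-point projection as a non-expanding retraction of $\CC(K)$ onto that subset, with compatibility of the order being exactly what guarantees the projection does not leave $\CC(K)$. The execution differs in two ways. First, the paper projects onto one half-space $\widetilde Y_{vw}$ at a time (where the projection has a closed form and preservation of $\CC(K)$ is immediate from $\sigma \cup \{v\} \subset \overline{\sigma}$), and obtains the set as an intersection of convex subsets; you project onto the whole isotonic cone $H$ at once, which forces you to prove $\pi_H(\CC(K)) \subseteq \CC(K)$ by optimality arguments --- your truncation-on-$W$ argument for $\supp(\pi_H\xi) \subseteq \overline{\supp \xi}$ is correct. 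Second, to conclude convexity the paper argues by contradiction that a shortest string leaving the subset is strictly shortened by the projection, while you analyze the equality case of the length inequality via the a.e.\ derivative of the piecewise affine map $\pi_H \circ \gamma$ and the rigidity of orthogonal projections; both arguments are valid here because geodesics come from strings (Theorem~\ref{thm:FS}) and the normal fan of $H$ is finite, though the paper's version avoids any measure-theoretic bookkeeping.

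One small omission: membership of $y = \pi_H\xi$ in $\CC(K)$ requires all coordinates of $y$ to lie in $[0,1]$, and you only verify $y \ge 0$ and the support condition; the upper bound $y_v \le 1$ is not automatic from what you wrote. It follows by the same trick you already use: the coordinatewise truncation $\min(y, 1)$ is monotone, hence still lies in $H$, and is strictly closer to $\xi$ if some $y_v > 1$, so optimality forces $y \le 1$. With that one line added, the proof is complete.
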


\begin{proof}
  Let $\phi \colon \GR{F(K)} \to \CC(K)$ be the isometry defined
  in the proof of Lemma~\ref{lem:FK-CCK}.
  Let $X$ be the image of $\GR{DF(K)}$ under $\phi$.
  By the definition of $\phi$ and the construction of its inverse,
  we have
  \begin{equation*}
    X = \Bigl\{\, \sum_{u \in V(K)} t_u u \in \CC(K) \;\Bigm|\;
    \text{$t_v \ge t_w$ for $v \le w$ in $V(K)$} \,\Bigr\}.
  \end{equation*}
  It is enough to show that $X$ is convex in $\CC(K)$.
  Set
  \begin{equation*}
    \widetilde Y_{vw} = \Bigl\{\, \sum_{u \in V(K)} t_u u \in \E^{(V(K))}
    \;\Bigm|\; t_v \ge t_w \,\Bigr\}
  \end{equation*}
  and $Y_{vw} = \CC(K) \cap \widetilde Y_{vw}$ for $v < w$ in $V(K)$.
  Then we have $X = \bigcap_{v < w} Y_{vw}$.
  Thus it is enough to show that $Y_{vw}$ is convex in $\CC(K)$ for $v < w$.
  We now define $\widetilde \psi_{vw} \colon \E^{(V(K))} \to \widetilde Y_{vw}$
  as follows. For any $\xi \in \E^{(V(K))}$,
  define $\widetilde \psi_{vw}(\xi) \in \widetilde Y_{vw}$
  to be the unique point such that
  \begin{equation*}
    d_{\E^{(V(K))}}(\xi, \widetilde \psi_{vw}(\xi))
    = \inf_{\eta \in \widetilde Y_{vw}} d_{\E^{(V(K))}}(\xi, \eta).
  \end{equation*}
  Indeed, $\widetilde \psi_{vw}$ is given by
  \begin{equation*}
    \widetilde \psi_{vw} \Bigl( \sum_{u \in V(K)} t_u u \Bigr) =
    \max \Bigl\{ t_v, \frac{t_v + t_w}{2} \Bigr\} v
    + \min \Bigl\{ t_w, \frac{t_v + t_w}{2} \Bigr\} w
    + \sum_{u \neq v, w} t_u u
  \end{equation*}
  Let us note that if $\sigma$ is a face of $K$ and $w \in \sigma$,
  then $\sigma \cup \{ v \} \subset \overline \sigma$ is also a face of $K$.
  Thus the image of $\CC(K)$ under $\widetilde \psi_{vw}$ is contained in
  $\CC(K)$.
  We define $\psi_{vw} \colon \CC(K) \to Y_{vw}$ to be the restriction
  of $\widetilde \psi_{vw}$.
  Since $\widetilde \psi_{vw}$ is non-expanding, $\psi_{vw}$ is non-expanding
  on each face, and thus on entire $\CC(K)$.
  Moreover, $\psi_{vw}$ is a retraction.
  Assume that $Y_{vw}$ is not a convex subset of $\CC(K)$.
  Then there exist $\xi, \eta, \zeta \in \CC(K)$ such that
  $\xi, \zeta \in Y_{vw}$, $\eta \notin Y_{vw}$, and
  $d_{\CC(K)}(\xi, \eta) + d_{\CC(K)}(\eta, \zeta) = d_{\CC(K)}(\xi, \zeta)$.
  Take a shortest string $\Sigma = \{ (\sigma_i, x_i, y_i) \}_{i = 1}^m$
  from $\xi$ to $\eta$.
  Then there exists $i = 1, \dots, m$ such that $x_i \in Y_{vw}$
  but $y_i \notin Y_{vw}$.
  For such $x_i$ and $y_i$, $\psi_{vw}$ strictly shortens their distance.
  Thus the resulting string $\Sigma' = \{ (\overline \sigma_i,
  \psi_{vw}(x_i), \psi_{vw}(y_i)) \}_{i = 1}^m$ from $\xi$ to $\psi_{vw}(\eta)$
  has length less than that of $\Sigma$.
  Hence we have
  \begin{align*}
    d_{\CC(K)}(\xi, \zeta)
    &\le d_{\CC(K)}(\xi, \psi_{vw}(\eta)) + d_{\CC(K)}(\psi_{vw}(\eta), \zeta)
    \\&< d_{\CC(K)}(\xi, \eta) + d_{\CC(K)}(\eta, \zeta)
    \\&= d_{\CC(K)}(\xi, \zeta),
  \end{align*}
  which is a contradiction.
\end{proof}

\begin{proof}[Proof of Theorem~\ref{thm:LDS-CAT0}]
  By Theorem~\ref{thm:LDS}, we can assume $S = DF(K)$
  for some finite-dimensional abstract simplicial complex $K$
  with a fixed compatible order on $K$.
  If $DF(K)$ is a flag semilattice, then $K$ is a flag complex,
  and thus $\GR{F(K)} \cong \CC(K)$ is a CAT(0) space.
  Hence its convex subset $\GR{DF(K)}$ is also a CAT(0) space.

  We now show the converse.
  Assume that $\GR{DF(K)}$ is a CAT(0) space.
  Let $\sigma_i \ (i = 1, 2, 3)$ be pairwise bounded elements of $DF(K)$,
  that is, $\sigma_i \cup \sigma_j \in DF(K)$ for $i, j \in \{ 1, 2, 3 \}$.
  Let $X$ be the image of $\GR{DF(K)} \subset \GR{F(K)}$
  under the isometry $\phi \colon \GR{F(K)} \to \CC(K)$
  in the proof of Lemma~\ref{lem:FK-CCK}.
  Then $X$ is isometric to $\GR{DF(K)}$ with the induced metric from $\CC(K)$,
  and thus $X$ is a CAT(0) space.
  Since the line segment $[\chi_{\sigma_i}, \chi_{\sigma_j}]$
  in $\E^{(V(K))}$ is contained in $X$, we have
  \begin{equation*}
    d_X(\chi_{\sigma_i}, \chi_{\sigma_j})
    = d_{\E^{(V(K))}}(\chi_{\sigma_i}, \chi_{\sigma_j})
  \end{equation*}
  for $i, j \in \{ 1, 2, 3 \}$.
  By the CAT(0) inequality, we have
  \begin{equation*}
    d_X(\frac 1 2 (\chi_{\sigma_1} + \chi_{\sigma_2}), \chi_{\sigma_3})
    \le d_{\E^{(V(K))}}(\frac 1 2 (\chi_{\sigma_1} + \chi_{\sigma_2}),
    \chi_{\sigma_3}).
  \end{equation*}
  This is possible only if the line segment
  $[ \frac 1 2 (\chi_{\sigma_1} + \chi_{\sigma_2}), \chi_{\sigma_3}]$
  is contained in $X$,
  which implies $\sigma_1 \cup \sigma_2 \cup \sigma_3 \in F(K)$,
  and thus $\sigma_1 \cup \sigma_2 \cup \sigma_3 \in DF(K)$.
\end{proof}

\section*{Acknowledgements}
The author would like to thank his supervisor Prof.\ Toshitake Kohno
for helpful advice and support.
This work is supported by JSPS KAKENHI Grant Number 26-3055.

\end{document}